\newcommand{\C}{\mathbb C}
\newcommand{\R}{\mathbb R}
\renewcommand{\Re}{\operatorname{Re}}
\renewcommand{\Im}{\operatorname{Im}}
\newcommand{\diag}{\operatorname{diag}}
\renewcommand{\i}{\mathrm{i}}
\newcommand{\tr}{\operatorname{tr}}
\newtheorem{theorem}{Theorem}[section]
\newtheorem{lemma}[theorem]{Lemma}
\newtheorem{proposition}[theorem]{Proposition}
\newtheorem{observation}[theorem]{Observation}
\newtheorem{corollary}[theorem]{Corollary}
\theoremstyle{definition}
\newtheorem{definition}[theorem]{Definition}
\newtheorem{problem}[theorem]{Problem}
\theoremstyle{remark}
\newtheorem{remark}[theorem]{Remark}
\newtheorem{example}[theorem]{Example}
\newtheorem{conjecture}[theorem]{Conjecture}
\begin{document}

\title{A singular M-matrix perturbed by a nonnegative rank one matrix has positive principal minors; is it D-stable?}
\author{Joris Bierkens\footnotemark[1] \ and Andr\'e Ran\footnotemark[2]}
\renewcommand{\thefootnote}{\fnsymbol{footnote}}
\footnotetext[1]{Donders Institute for Brain, Cognition and Behaviour, Radboud University Nijmegen, Heyendaalseweg 135,
6525 AJ Nijmegen,
The Netherlands.
E-mail: \texttt{j.bierkens@science.ru.nl}.
The research leading to these results has received funding from the European Community's Seventh Framework Programme (FP7/2007-2013) under grant agreement no. 270327.
}
\footnotetext[2]{Department of Mathematics, FEW, VU university Amsterdam, De Boelelaan
    1081a, 1081 HV Amsterdam, The Netherlands
    and Unit for BMI, North-West~University,
Potchefstroom,
South Africa. E-mail:
    \texttt{a.c.m.ran@vu.nl}}
\date{}

\maketitle

\begin{abstract}
The positive stability and D-stability of singular M-matrices, perturbed by (non-trivial) nonnegative rank one perturbations, is investigated. In special cases positive stability or D-stability can be established. In full generality this is not the case, as illustrated by a counterexample. However, matrices of the mentioned form are shown to be P-matrices.
\end{abstract}

\emph{AMS 2010 Subject classifications.} Primary 15A18; secondary 15BXX, 34D20 

\emph{Keywords and phrases.} Stability, M-matrices, P-matrices, D-stability, nonnegative matrices, Perron-Frobenius theory, spectral theory, rank one perturbation

\section{Introduction}

\subsection{Problem formulation}
Let $H \in \C^{n \times n}$.
Let $\rho(H)$ denote the spectral radius of $H$, i.e. \[\rho(H) = \max\{ |\lambda| :\lambda \in \C, \det (\lambda I - H) = 0 \}.\] We consider the matrix $A := \rho(H) I - H$. Let $v, w \in \C^n$. In this paper we deal with the stability properties of the matrix $B = \rho(H) I - H + v w^{\star} = A + v w^{\star}$. In particular, we may ask the following questions:

\begin{problem}
\label{prob:general}
\begin{itemize}
\item[(i)] Under what conditions is $B$ strictly positive stable?
\item[(ii)] Let $C \in \C^{n\times n}$. Under what conditions is $C B$ strictly positive stable?
\end{itemize}
\end{problem}

Our primary interest is in the case where $H$ is real nonnegative, in which case $A$ is a singular M-matrix, and the perturbation $v w^T$ is positive, i.e. a matrix having only positive entries.

We encountered this problem in the process of studying stability of a particular type of ordinary differential equation. This motivation is discussed briefly in Section~\ref{sec:origin}, and with some more attention to detail in Appendix~\ref{app:origin_of_problem}. 
  We will provide a few general observations in Section~\ref{sec:general_observations}. These observations are not  restricted to the class of nonnegative matrices, and will help us to eliminate some trivial cases. Still by means of introduction, in Section~\ref{sec:normal} we will pay attention to the elementary case where $H$ is not assumed to be nonnegative but instead symmetric (or more generally, normal), and the rank one perturbation symmetric. 

The nonnegative case is more challenging. Our main question is whether a matrix of the mentioned form is D-stable, i.e. stable even when multiplied by any positive diagonal matrix. This problem turns out to be difficult, and we can not yet conclusively answer this question. However we are able to show stability or even D-stability in certain special cases, which may be considered the main results of this paper (Section~\ref{sec:nonnegative}). A counterexample is provided to show that we cannot hope to establish D-stability for the general class of singular M-matrices with positive rank one perturbations. To provide direction for further research, we state as a conjecture that if $A$ is a  singular and symmetric M-matrix, which geometrically simple eigenvalue 0, and if $v w^T$ is a positive rank one perturbation, then $A + v w^T$ is D-stable; see Section~\ref{sec:conjecture} for the counterexample and the conjecture.
A result that is potentially useful in this direction, and interesting in its own right, is that all matrices in this class are P-matrices, i.e. matrices with positive principal minors (Section~\ref{sec:pmatrices}).

The problem is part of a wider context: in the last two decades the study of eigenvalues and Jordan structure of rank-one perturbations of matrices
has seen rapid development. For unstructured matrices we refer to \cite{DM,HM,RW,Sav1,Sav2}. For matrices that exhibit structure in the setting of an
indefinite inner product space several surprising results have been obtained, see \cite{FGJR,MMRR1,MMRR2,MMRR3}. From this point of view the problem we
study here is natural: given an M-matrix $A=\rho(H)I-H$, and given positive vectors $v$, $w$, it is natural to ask whether or not all eigenvalues of 
$A+vw^T$ are in the open right half plane. 

\subsection{Notation and definitions}
The euclidean norm in $\C^n$ is denoted by $|\cdot|$, with corresponding inner product $\langle \cdot, \cdot \rangle$. The matrix norm induced by $|\cdot|$ is denoted by $||\cdot||$. The identity matrix is denoted by $I$. If $x_1, \hdots, x_m$ are vectors in $\C^n$, then $[x_1, \hdots, x_m]$ denotes the matrix in $\C^{n\times m}$ that has $x_1, \hdots, x_m$ as its columns, in the specified order. Entries of a matrix $A \in \C^{n \times n}$ or a vector $x \in \C^n$ are denoted by $(a_{ij})_{i,j = 1, \hdots, n}$ or $(x_i)_{i = 1, \hdots, n}$, respectively.
For a matrix $A$ we denote its \emph{spectrum} by $\sigma(A) = \left\{ \lambda \in \C : \det(\lambda I - A) = 0 \right\}$ and its \emph{spectral radius} by $\rho(A) = \max \left\{ |\lambda| : \lambda \in \sigma(A) \right\}$.
We say that a matrix $A$ is \emph{(strictly) positive stable} if $\Re \lambda \geq 0$ ($\Re \lambda > 0$) for all $\lambda \in \sigma(A)$.
If a matrix $A$ or a vector $x$ has only nonnegative (positive) entries, we write $A \geq 0, (A > 0), x \geq 0, (x > 0)$, respectively.
A \emph{positive diagonal matrix} is a diagonal matrix with positive entries on the diagonal.

%
%

\subsection{Origin of the problem: a dynamical system for the solution of an eigenvalue problem}
\label{sec:origin}
Suppose $H, C \in \R^{n \times n}$ with $C$ nonsingular. For $z \in \R^n$, $z \neq 0$, let $P_z = \frac{z z^T}{|z|^2}$. Note that for $z \in \R^n$, $P_z$ is the matrix corresponding to orthogonal projection onto the span of $z$. Consider the following coupled system of ordinary differential equations.
\begin{equation} \label{eq:ode} \left\{ \begin{array}{ll} \dot z(t) & = \left(I - P_{z(t)} \right) C (H - \lambda(t) I) z(t), \\
                          \dot \lambda(t) & = z(t)^T C \left(H - \lambda(t) I \right)z(t),
                         \end{array} \right.
                         \quad (t \geq 0.)
\end{equation}

It turns out that the equilibrium points of~\eqref{eq:ode} are exactly the pairs $(z, \lambda)$ with $z$ an eigenvector corresponding to the eigenvalue $\lambda$ of $H$. Furthermore $(z, \lambda)$ is a locally (asymptotically) stable equilibrium point if and only if $C \left( \lambda I - H + z z^T \right)$ is (strictly) positive stable. The proofs of these results are relegated to Appendix~\ref{app:origin_of_problem}. We are especially interested in the case where $\lambda = \rho(H) \in \sigma(H)$, which is the case for $H$ nonnegative and for $H$ symmetric positive semidefinite.

\subsection{General observations}
\label{sec:general_observations}
%

\begin{observation} \label{obs:geometrically_simple} Suppose $\rho(H)$ is an eigenvalue of $H \in \C^{n \times n}$, but is not geometrically simple. Then $\rho(H) I - H + v w^{\star}$ is not strictly positive stable for any $v, w \in \C^n$.
\end{observation}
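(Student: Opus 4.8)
The hypothesis says precisely that the eigenvalue $0$ of $A = \rho(H) I - H$ has geometric multiplicity at least two. My plan is to exploit this extra dimension to produce a vector that lies in $\ker A$ \emph{and} is annihilated by the rank one perturbation, so that it becomes an eigenvector of $B$ for the eigenvalue $0$; this shows $B$ is singular and hence cannot be strictly positive stable.

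First I would record the translation between the language of the statement and a statement about $A$. Since $\ker A = \ker(\rho(H) I - H)$ is exactly the eigenspace of $H$ associated with the eigenvalue $\rho(H)$, the assumption that $\rho(H)$ is an eigenvalue that is \emph{not} geometrically simple is equivalent to $\dim \ker A \geq 2$. Next I would consider the linear functional $x \mapsto w^{\star} x$ restricted to the subspace $\ker A$. Because $\dim \ker A \geq 2$, the rank--nullity theorem guarantees that this functional vanishes on a subspace of $\ker A$ of dimension at least one; in particular there exists a nonzero vector $x \in \ker A$ with $w^{\star} x = 0$.

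For such an $x$ one computes
\[
B x = (A + v w^{\star}) x = A x + v\,(w^{\star} x) = 0 + v \cdot 0 = 0,
\]
so $x$ is a nonzero element of $\ker B$. Hence $B$ is singular and $0 \in \sigma(B)$. Since $\Re 0 = 0$, the matrix $B = \rho(H) I - H + v w^{\star}$ is not strictly positive stable, and this conclusion holds for every choice of $v, w \in \C^n$, as claimed.

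I do not expect a genuine obstacle here, as the argument is a single dimension count. The only step where the hypothesis is essential is the production of a nonzero $x \in \ker A$ with $w^{\star} x = 0$: had $0$ been geometrically simple (so $\dim \ker A = 1$), the functional $w^{\star}$ could be nonzero on the one-dimensional kernel and the construction would break down. This is exactly as it should be, since in the geometrically simple case one genuinely can hope to push the zero eigenvalue into the open right half plane, which is the situation studied in the remainder of the paper.
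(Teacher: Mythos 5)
Your proof is correct and is essentially the paper's argument made explicit: the paper simply notes that a rank one perturbation can raise the rank by at most one, so $B$ stays singular, while you unfold that same dimension count by exhibiting a nonzero $x \in \ker A$ annihilated by $w^{\star}$. No gap; nothing further needed.
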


\begin{proof}
In this case $\dim \left(\ker(\rho(H) I - H) \right)\geq 2$. Since a perturbation of rank one can only increase the rank of a matrix by one, $\rho(H) I - H + v w^{\star}$ is not strictly positive stable. 
\end{proof}

In view of this observation, we will always assume that the zero eigenvalue of $A$ is geometrically simple.
First we briefly consider the special case in which $v$ is a right eigenvector of $H$ corresponding to the eigenvalue $\rho(H)$.

\begin{proposition}
\label{prop:spectrum_v_eigenvector}
Let $H \in \C^{n \times n}$, and suppose that $\rho(H)$ is a eigenvalue of $H$ with right eigenvector $v$. Let $B = \rho(H) I - H + v w^{\star}$, with $w \in \C^n$.
Then 
\begin{itemize}
 \item If $\rho(H)$ is algebraically simple, then \[ \sigma(B) = \sigma(\rho(H) I - H) \backslash \{0\} \cup \{ w^{\star} v \}.\]
In particular if $\Re w^{\star} v \geq 0$ (resp. $\Re w^{\star} v > 0$) then $B$ is positive stable (resp. strictly positive stable).
\item If $\rho(H)$ is not algebraically simple, then
\[ \sigma(B) = \sigma(\rho(H) I - H) \cup \{ w^{\star} v \}.\]
In particular $0 \in \sigma(B)$, so that $B$ is not strictly positive stable. If $\Re w^{\star} v \geq 0$ then $B$ is positive stable.
\end{itemize}

\end{proposition}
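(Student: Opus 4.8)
The plan is to compute the characteristic polynomial of $B$ explicitly and read off its roots in each case. The starting point is that, since $v$ is a right eigenvector of $H$ for $\rho(H)$, we have $Av = (\rho(H) I - H)v = 0$; that is, $v$ spans the (geometrically simple) kernel of $A$. Consequently $Bv = Av + v(w^{\star} v) = (w^{\star} v) v$, so $w^{\star} v$ is always an eigenvalue of $B$ with eigenvector $v$. It remains to track what happens to the rest of the spectrum, and for this I would work with $\det(\lambda I - B)$ directly rather than invoking a black-box perturbation result.

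Second, I would fix $\lambda \notin \sigma(A)$, so that $\lambda I - A$ is invertible, and apply the matrix determinant lemma,
\[
\det(\lambda I - B) = \det\bigl((\lambda I - A) - v w^{\star}\bigr) = \det(\lambda I - A)\,\bigl(1 - w^{\star} (\lambda I - A)^{-1} v\bigr).
\]
The key simplification is that $Av = 0$ forces $(\lambda I - A)^{-1} v = \lambda^{-1} v$ for every $\lambda \neq 0$, whence $w^{\star} (\lambda I - A)^{-1} v = (w^{\star} v)/\lambda$. This yields
\[
\det(\lambda I - B) = \det(\lambda I - A)\cdot \frac{\lambda - w^{\star} v}{\lambda}
\]
for all $\lambda \notin \sigma(A)$.

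Third, writing $\det(\lambda I - A) = \lambda^m g(\lambda)$ with $g(0) \neq 0$, where $m \geq 1$ is the algebraic multiplicity of the eigenvalue $0$ of $A$, the displayed relation becomes $\det(\lambda I - B) = \lambda^{m-1}(\lambda - w^{\star} v) g(\lambda)$ on the cofinite set $\C\setminus\sigma(A)$. Since both sides are polynomials agreeing on infinitely many points, the identity holds for all $\lambda \in \C$. The two cases now fall out by inspection of the roots: if $\rho(H)$ is algebraically simple then $m = 1$ and $\det(\lambda I - B) = (\lambda - w^{\star} v) g(\lambda)$, whose root set is $\{w^{\star} v\}$ together with the roots of $g$, i.e. $\sigma(A)\setminus\{0\}$; if $\rho(H)$ is not algebraically simple then $m \geq 2$, the factor $\lambda^{m-1}$ survives, $0$ remains a root, and we obtain $\sigma(B) = \sigma(A)\cup\{w^{\star} v\}$ with $0 \in \sigma(B)$.

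Finally, for the stability conclusions I would use that every eigenvalue of $A = \rho(H) I - H$ has the form $\rho(H) - \mu$ with $\mu \in \sigma(H)$, and that $|\mu| \leq \rho(H)$ forces $\Re(\rho(H)-\mu) \geq 0$, with strict inequality unless $\mu = \rho(H)$; hence every nonzero eigenvalue of $A$ lies in the open right half-plane. Combining this with the location of the single extra eigenvalue $w^{\star} v$ gives the claimed positive (resp. strictly positive) stability statements. The computation itself is routine; the only point requiring care is the bookkeeping of the algebraic multiplicity $m$ — in particular justifying the passage from the rational identity valid off $\sigma(A)$ to a polynomial identity valid everywhere, and cancelling exactly one factor of $\lambda$ so that the two cases are distinguished precisely by whether $\lambda^{m-1}$ retains a zero at the origin.
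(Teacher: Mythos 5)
Your proof is correct, and it takes a genuinely different route from the paper. The paper argues via a similarity transformation: it writes the Jordan decomposition $H = TJT^{-1}$ with the eigenvector $v$ as the first column of $T$, observes that $T^{-1}BT = \rho(H)I - J + e_1(w^{\star}T)$ is upper triangular with diagonal entries $w^{\star}v, \rho(H)-\lambda_2,\dots$, and reads the spectrum off the diagonal. You instead compute the characteristic polynomial directly via the rank-one determinant formula $\det(\lambda I - A - vw^{\star}) = \det(\lambda I - A)\bigl(1 - w^{\star}(\lambda I - A)^{-1}v\bigr)$, exploit $Av=0$ to get $w^{\star}(\lambda I - A)^{-1}v = (w^{\star}v)/\lambda$, and extend the resulting rational identity to a polynomial identity. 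Your route is closer in spirit to the paper's later Lemma~\ref{lem:spectrumMmatrixplusrankone} than to its proof of this proposition. What your approach buys: it needs only the single relation $Av=0$ and sidesteps the (slightly delicate) step of choosing a Jordan basis whose first column is the given eigenvector $v$, which in the non-algebraically-simple case requires a word of justification that the paper glosses over with ``the argument is analogous''; your multiplicity bookkeeping via the exponent $m$ is clean and the passage from the cofinite set $\C\setminus\sigma(A)$ to all of $\C$ is correctly justified. What the paper's approach buys: the triangular form exhibits the full spectrum with algebraic multiplicities at a glance, without any determinant manipulation. One harmless slip on your side: your parenthetical assertion that $v$ spans the ``geometrically simple'' kernel of $A$ is neither assumed nor needed --- when $\rho(H)$ is not geometrically simple the kernel is larger --- but nothing in your argument depends on it.
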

By taking Hermitian adjoints, we could equivalently consider the case where $w$ is a left eigenvector of $H$. This result is not new (see e.g. \cite{Ding2007} and likely many other references) but should be mentioned in the context of this paper. 
\begin{proof}
Assume $\rho(H)$ is an algebraically simple eigenvalue of $H$ with right eigenvector $v \in \R^n$. We may write the Jordan decomposition of $H$ as $H = T J T^{-1}$ where the first column of $T$ is equal to $v$. Then 
\[ T^{-1} B T = \rho(H) I - J + \begin{bmatrix} 1 & 0 & \hdots & 0 \end{bmatrix}^T (w^{\star} T).\]
Note that the last term only contributes to the first row, so that $T^{-1} B T$ is an upper triangular matrix. Therefore the eigenvalues of $A$ are characterized by the diagonal elements of $T^{-1} B T$, with as topleft element $(w^{\star} T)_1 = w^{\star} v$, and the other elements equal to the non-zero eigenvalues of $\rho(H) I - J$, which all have positive real parts.

In case $\rho(H)$ is an eigenvalue of $H$ that is not algebraically simple, the argument is analogous. Only in this case $\rho(H)I - J$ has more than one diagonal element equal to zero, so that $0 \in \sigma(B)$.
\end{proof}

The following lemma will be useful in the remainder of this paper.
\begin{lemma}
\label{lem:nonsingular}
Suppose zero is a geometrically simple eigenvalue of $A$, with left- and right eigenvectors $z_{\ell}^{\star} \neq 0$ and $z_{r}\neq 0$ (i.e. $z_{\ell}^{\star} A = 0$ and $A z_r = 0$). Then $A + v w^{\star}$ is nonsingular if and only if $z_{\ell}^{\star} v \neq 0$ and $w^{\star} z_{r} \neq 0$.
\end{lemma}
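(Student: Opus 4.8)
The plan is to analyze the kernel of $A + vw^{\star}$ directly, using that geometric simplicity of the zero eigenvalue makes both $\ker A$ and the left null space one-dimensional, spanned by $z_{r}$ and $z_{\ell}$ respectively. The two facts I will lean on throughout are $A z_{r} = 0$ and $z_{\ell}^{\star} A = 0$, which let me build a null vector from the right or annihilate the $A$-part from the left.

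First I would dispatch the contrapositive of the ``only if'' direction by exhibiting explicit null vectors. If $w^{\star} z_{r} = 0$, then $(A + vw^{\star})z_{r} = A z_{r} + v(w^{\star} z_{r}) = 0$ with $z_{r} \neq 0$, so the matrix is singular; symmetrically, if $z_{\ell}^{\star} v = 0$, then $z_{\ell}^{\star}(A + vw^{\star}) = z_{\ell}^{\star} A + (z_{\ell}^{\star} v)w^{\star} = 0$ with $z_{\ell} \neq 0$, again forcing singularity. Hence nonsingularity requires both $z_{\ell}^{\star} v \neq 0$ and $w^{\star} z_{r} \neq 0$. For the converse, assume both scalars are nonzero and let $x$ satisfy $(A + vw^{\star})x = 0$, i.e. $Ax = -(w^{\star} x)v$. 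Multiplying on the left by $z_{\ell}^{\star}$ and using $z_{\ell}^{\star} A = 0$ gives $(z_{\ell}^{\star} v)(w^{\star} x) = 0$, so $w^{\star} x = 0$; then $Ax = 0$ places $x = c\,z_{r}$ in $\ker A$, and substituting back into $w^{\star} x = 0$ yields $c\,(w^{\star} z_{r}) = 0$, hence $c = 0$ and $x = 0$. Thus $\ker(A + vw^{\star}) = \{0\}$.

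I do not expect a real obstacle here: the proof reduces to two rank-one bookkeeping identities together with the observation that the nonvanishing of $z_{\ell}^{\star} v$ is precisely what removes the freedom coming from the one-dimensional kernel. The only point deserving care is the correct invocation of geometric simplicity --- it guarantees $\dim \ker A = 1$, and since $\dim \ker A^{\star} = \dim \ker A$ the left null space is also one-dimensional, so that $z_{r}$ and $z_{\ell}$ are well-defined up to scaling and the conditions $z_{\ell}^{\star} v \neq 0$, $w^{\star} z_{r} \neq 0$ do not depend on the chosen normalization.
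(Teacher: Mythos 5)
Your proof is correct and follows essentially the same route as the paper: the same explicit null vectors $z_{r}$ and $z_{\ell}^{\star}$ for the ``only if'' direction, and the same left-multiplication by $z_{\ell}^{\star}$ followed by the geometric-simplicity argument for the converse. The remark about the left null space also being one-dimensional is a sound (if implicit in the paper) point of care.
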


\begin{proof}
Assume first that $z_{\ell}^{\star} v \neq 0$ and $w^{\star} z_{r} \neq 0$ and suppose that $x \in \ker (A + v w^{\star})$. In particular $z_{\ell}^{\star} v w^{\star} x = z_{\ell}^{\star} (A + v w^{\star}) x = 0$. This implies that $w^{\star} x = 0$ (since $z_{\ell}^{\star} v \neq 0$). However then $A x = (A + v w^{\star}) x = 0$, so that $x = \gamma z_{r}$ for some $\gamma \in \C$, using that 0 is a geometrically simple eigenvalue. Then $w^{\star} x = \gamma w^{\star} z_{r} = 0$, which implies that $\gamma = 0$, or equivalently that $x = 0$. 

Conversely, if $w^{\star} z_{r} = 0$, then $(A + v w^{\star}) z_{r} = 0$; similarly $z_{\ell}^{\star}(A + vw^{\star}) = 0$ if $z_{\ell}^{\star} v = 0$.
\end{proof}

This lemma shows that the following (`non-zero projection') condition is necessary for strict positive stability.
\begin{equation}
\label{eq:nonzero_projection} \tag{NZP} (z_{\ell}^{\star} v) (w^{\star} z_r) \neq 0.
\end{equation}

\subsection{Normal matrices}
\label{sec:normal}
For $H \in \C^{n \times n}$ normal (in the sense that $H H^{\star} = H^{\star} H$), and the rank one perturbation Hermitian (i.e. equal to $v v^{\star}$ for some $v \in \C^n$), the situation is quite straightforward. We will only require one lemma.

\begin{lemma}
\label{lem:spectrum_HplusHtranspose}
Let $H \in \C^{n \times n}$ be a normal matrix. Then 
\begin{itemize}
 \item[(i)] $\rho(H + H^{\star}) = 2 \max \left\{ |\Re \mu | : \mu \in \sigma(H) \right\} \leq 2 \rho(H)$. 
 \item[(ii)] $2 \rho(H) \in \sigma(H + H^{\star})$ if and only if $\rho(H) \in \sigma(H)$,
 \item[(iii)] for $x \in \C^n$, we have that $(H + H^\star) x = 2 \rho(H) x$ if and only if $H x = \rho(H) x$.
\end{itemize}
\end{lemma}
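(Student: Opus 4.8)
The plan is to reduce everything to the spectral theorem for normal matrices. Since $H$ is normal, there exist a unitary $U$ and a diagonal matrix $D = \diag(\mu_1, \dots, \mu_n)$, whose diagonal entries are the eigenvalues of $H$, such that $H = U D U^\star$. Then $H^\star = U \bar D U^\star$, and hence
\[
H + H^\star = U(D + \bar D) U^\star = U \diag(2\Re \mu_1, \dots, 2 \Re \mu_n) U^\star.
\]
In particular $H + H^\star$ is Hermitian with eigenvalues $2 \Re \mu_j$, $j = 1, \dots, n$, and the columns of $U$ provide an orthonormal basis that simultaneously diagonalizes $H$, $H^\star$, and $H + H^\star$. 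All three parts will be read off from this single decomposition.

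For (i), since $H + H^\star$ is Hermitian its spectrum is real, so $\rho(H + H^\star) = \max_j |2 \Re \mu_j| = 2 \max\{ |\Re \mu| : \mu \in \sigma(H)\}$, and the elementary chain $|\Re \mu| \leq |\mu| \leq \rho(H)$ yields the bound $\rho(H + H^\star) \leq 2\rho(H)$. For (ii), the identification in (i) shows that $2\rho(H) \in \sigma(H + H^\star)$ if and only if some eigenvalue $\mu$ of $H$ satisfies $\Re \mu = \rho(H)$; the point requiring a moment of care is that the chain $\Re \mu \leq |\mu| \leq \rho(H)$ can be saturated only when $\mu$ is real and nonnegative, so $\Re \mu = \rho(H)$ forces $\mu = \rho(H)$, which is exactly the assertion $\rho(H) \in \sigma(H)$.

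For (iii), the forward direction uses the standard fact that a normal matrix and its adjoint share eigenvectors: if $Hx = \rho(H) x$ then $H^\star x = \overline{\rho(H)}\, x = \rho(H) x$ (as $\rho(H)$ is real), so $(H + H^\star) x = 2\rho(H) x$. For the converse I would expand $x$ in the orthonormal eigenbasis given by the columns of $U$; the equation $(H + H^\star)x = 2\rho(H)x$ forces every nonzero component of $x$ to lie in an eigenspace with $\Re \mu = \rho(H)$, and by the equality analysis of (ii) this means $\mu = \rho(H)$, so $Hx = \rho(H) x$. I expect the only genuinely substantive step to be that equality analysis $\Re \mu = \rho(H) \Rightarrow \mu = \rho(H)$, which is invoked in both (ii) and (iii); the remainder is bookkeeping on the spectral decomposition.
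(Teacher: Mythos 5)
Your proposal is correct and follows essentially the same route as the paper: unitary diagonalization of the normal matrix $H$, the identification $\sigma(H+H^\star)=\{2\Re\mu:\mu\in\sigma(H)\}$, and the observation that $\Re\mu=\rho(H)$ forces $\mu=\rho(H)$ to handle the converse directions of (ii) and (iii). Your explicit equality analysis of the chain $\Re\mu\le|\mu|\le\rho(H)$ is in fact slightly more careful than the paper's phrasing at the one point where care is needed, so there is nothing to add.
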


\begin{proof}
Since $H$ is normal, it is unitarily equivalent to a diagonal matrix, say $H = U \Lambda U^{\star}$. Therefore \[ U^{\star} (H + H^{\star}) U = U^{\star} H U + \left( U H U^{\star}\right)^{\star} = \Lambda + \Lambda^{\star}.\] 
This shows that $\sigma (H + H^{\star}) = \left\{ 2 \Re \mu : \mu \in \sigma(H) \right\}$, and in particular (i) follows. Also an eigenvector of $H + H^{\star}$ corresponding to the eigenvalue $2 \Re \mu$ of $H+H^{\star}$, for $\mu \in \sigma(H)$, is equal to an eigenvector of $H$ corresponding to the eigenvalue $\mu$ of $H$. 
If $\rho(H) \in \sigma(H)$, then $2 \rho(H) \in \sigma(H+H^{\star})$ and if $H x = \rho(H) x$, then $(H+H^{\star})x = 2 \rho(H) x$.
Now let $x \in \C^n$, such that $(H+H^{\star})x = 2 \rho(H) x$. If $x = 0$, trivially $H x = \rho(H) x$. Suppose $x \neq 0$. For all eigenvalues $2 \Re \mu$ of $H + H^{\star}$ such that $\mu \neq \rho(H)$, we necessarily have that $\Re \mu < \rho(H)$, so that $2 \Re \mu < 2 \rho(H)$. Therefore, if $(H+H^{\star}) x = 2 \rho(H) x$, then $x$ is an eigenvector corresponding to the eigenvalue $\rho(H)$ of $H$.
\end{proof}

\begin{proposition}
\label{prop:normal}
Suppose that $H \in \C^{n \times n}$ is normal and $v \in \C^n$. Let $B = \rho(H) I - H + v v^{\star}$ and let $C \in \C^{n \times n}$ be positive definite. In the following cases, $CB$ is strictly positive stable.
\begin{itemize}
 \item[(i)] $\rho(H)$ is a geometrically simple eigenvalue of $H$ with eigenvector $z$ and~\eqref{eq:nonzero_projection} holds (i.e. $v^{\star} z \neq 0$);
 \item[(ii)] $\rho(H) \notin \sigma(H)$.
\end{itemize}
\end{proposition}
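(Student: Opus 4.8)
The plan is to reduce the (seemingly $C$-dependent) question of strict positive stability of $CB$ to the single, $C$-independent statement that the Hermitian part of $B$ is positive definite. The mechanism is a Lyapunov-type computation in which the inverse of $C$ serves as the Lyapunov matrix. Concretely, suppose $CBx = \mu x$ for some $\mu \in \C$ and $x \neq 0$, and set $P := C^{-1}$, which is again positive definite because $C$ is. Then $\mu\, x^{\star} P x = x^{\star} P C B x = x^{\star} B x$, and taking conjugate transposes gives $\bar\mu\, x^{\star} P x = x^{\star} B^{\star} x$ (using that $x^{\star} P x$ is real and positive). Adding the two identities yields
\[
2 (\Re \mu)\, x^{\star} P x = x^{\star}(B + B^{\star}) x.
\]
Since $x^{\star} P x > 0$, it follows that $\Re \mu > 0$ provided $x^{\star}(B + B^{\star}) x > 0$. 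Hence it suffices to prove that $B + B^{\star}$ is positive definite; this is the crux, and it is here, not in the handling of $C$, that the two hypotheses enter.

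To establish positive definiteness of $B + B^{\star}$, I would write
\[
B + B^{\star} = \bigl(2 \rho(H) I - (H + H^{\star})\bigr) + 2 v v^{\star}.
\]
Both summands are Hermitian and positive semidefinite: the second obviously so, and for the first note that by Lemma~\ref{lem:spectrum_HplusHtranspose}(i) every eigenvalue of $H + H^{\star}$ is at most $2 \rho(H)$, so $2\rho(H) I - (H + H^{\star})$ has nonnegative spectrum. Thus $x^{\star}(B + B^{\star}) x \geq 0$ for all $x$, and the task is to rule out equality for $x \neq 0$. Because the two summands are each positive semidefinite, $x^{\star}(B + B^{\star}) x = 0$ forces $x$ to lie in the kernel of $2\rho(H)I - (H+H^{\star})$ and simultaneously to satisfy $v^{\star} x = 0$.

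In case (ii), $\rho(H) \notin \sigma(H)$, so by Lemma~\ref{lem:spectrum_HplusHtranspose}(ii) we have $2\rho(H) \notin \sigma(H + H^{\star})$; combined with part (i) this makes every eigenvalue of $H + H^{\star}$ strictly less than $2\rho(H)$, so $2\rho(H)I - (H+H^{\star})$ is already positive definite and no further work is needed. In case (i), $2\rho(H) \in \sigma(H + H^{\star})$, and by Lemma~\ref{lem:spectrum_HplusHtranspose}(iii) the kernel of $2\rho(H)I - (H+H^{\star})$ is exactly the $\rho(H)$-eigenspace of $H$, which by geometric simplicity is the line $\C z$. An offending vector would then be $x = \gamma z$ with $v^{\star} x = \gamma\, v^{\star} z = 0$; but \eqref{eq:nonzero_projection} gives $v^{\star} z \neq 0$, forcing $\gamma = 0$ and hence $x = 0$. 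The main subtlety, and the only place where the argument is not automatic, is precisely this case~(i): the first summand is genuinely only semidefinite, and positive definiteness of $B + B^{\star}$ is recovered solely from the nonnegative rank-one term through the condition $v^{\star} z \neq 0$.
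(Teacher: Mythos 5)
Your proposal is correct and follows essentially the same route as the paper: both reduce the problem to showing $B + B^{\star} = 2\rho(H)I - (H+H^{\star}) + 2vv^{\star}$ is positive definite using Lemma~\ref{lem:spectrum_HplusHtranspose}, and both handle $C$ by observing that $C^{-1}$ serves as a Lyapunov solution (the paper cites the Lyapunov theorem where you re-derive the needed implication directly, which is a cosmetic difference only).
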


\begin{proof}
We write $\lambda := \rho(H)$.
\begin{itemize}
 \item[(i)] By Lemma~\ref{lem:spectrum_HplusHtranspose} (i), (ii), we have
\[ \rho(H+H^{\star}) = 2 \rho(H) = 2 \lambda \in \sigma(H+H^{\star}).\] In particular $2 \lambda I - H - H^{\star}$ is positive semidefinite.
Let $x \in \C^n$. Then
\begin{align*}
\left \langle x, \left( 2 \lambda I - H - H^{\star} + 2 v v^{\star}\right) x \right \rangle \geq 0.
\end{align*}
Suppose the inequality holds with equality. Then both $(H + H^{\star})x= 2 \lambda x$ and $\langle v, x \rangle = 0$.
The first equality implies, by Lemma~\ref{lem:spectrum_HplusHtranspose} (iii), that $H x = \lambda x$. Since $\lambda$ is a geometrically simple eigenvalue, $x = \mu z$ for some $\mu \in \C$. Hence $\mu v^{\star} z = v^{\star} x = 0$ gives that $\mu = 0$, so $x = 0$.
We may conclude that $B + B^{\star} = 2 \lambda I - H - H^{\star} + 2 v v^{\star}$ is positive definite.
For $C$ positive definite, we have that
$ C^{-1} C B +B^{\star} C C^{-1} = B + B^{\star}$ is positive definite. By the Lyapunov theorem \cite[Theorem 2.2.1]{HornJohnson1994}, $C B$ is strictly positive stable.
 \item[(ii)] By Lemma~\ref{lem:spectrum_HplusHtranspose} (i), $\rho(H + H^{\star}) \leq 2 \rho(H)$, so that $2 \lambda I -H - H^{\star}$ is positive definite. In particular, $2 \lambda I - H - H^{\star} + 2 v v^{\star}$ is positive definite, and we may follow the proof of (i) to deduce that $C B$ is strictly positive stable.
\end{itemize}
\end{proof}

\section{Nonnegative case}
\label{sec:nonnegative}
In this section we consider the case where $H \geq 0$ and $v, w \in \R^n$. The main results are Theorem~\ref{thm:main}, in which positive stability is established in certain special cases, and Corollary~\ref{cor:d-stable}, which states the implications of Theorem~\ref{thm:main} regarding D-stability (as defined below) under certain conditions.

Inevitably, this section will strongly depend on the theory of M-matrices; see \cite[Chapter 6]{BermanPlemmons1994} or \cite[Section 2.5]{HornJohnson1994}.

\begin{definition} An \emph{M-matrix} is a matrix of the form $\gamma I - P$, where $P \in \R^{n \times n}$, $P \geq 0$, and $\gamma \geq \rho(P)$.
We call $A \in \R^{n\times n}$ \emph{D-stable} if $D A$ is strictly positive stable for any positive diagonal matrix $D$.
\end{definition}
M-matrices $\gamma I - P$, $P \geq 0$, where $\gamma > \rho(P)$ will be explicitly referred to as \emph{nonsingular M-matrices} here, in contrast to \emph{singular M-matrices} for which $\gamma = \rho(P)$.
Throughout this section, let $A = \rho(H) I - H \in \R^{n \times n}$ denote a singular M-matrix, with $H \geq 0$, and let $v$, $w \in \R^n$.

We will focus our attention to the following problem:

\begin{problem}
\label{prob:D-stable}
Under what conditions is $A + v w^T$ D-stable?
\end{problem}

\begin{remark}
We are considering stability under left-multiplication by a diagonal matrices, since both M-matrices and rank two matrices behave well under multiplication with a positive diagonal matrix. For multiplication on the left by more general matrices, we cannot expect to achieve general results.
\end{remark}

The solution of Problem~\ref{prob:D-stable} is our ultimate goal for this section. Unfortunately we are not yet going to succeed in establishing general conditions for $A + v w^T$ to be D-stable. A weaker version of Problem~\ref{prob:D-stable} is needed: instead of investigating positive stability of $D(A+vw^T)$ for all positive diagonal matrices $D$, we fix $D$.

\begin{problem}
\label{prob:stable_for_given_D}
Suppose $D$ is a positive diagonal matrix. Under what conditions is $D(A + v w^T)$ strictly positive stable?
\end{problem}

We may write $D(A + v w^T) = \widetilde A + \widetilde v w^T$, where $\widetilde A = D A$ and $\widetilde v = D v$. Then $\widetilde A$ is a singular M-matrix. We may therefore consider the following simpler but equivalent problem.

\begin{problem}
\label{prob:stable}
Under what conditions is $A + v w^T$ strictly positive stable?
\end{problem}

Recall that we assume that the zero eigenvalue of $A$ is geometrically simple (see Observation~\ref{obs:geometrically_simple}), with $z_{\ell} \geq 0$ and $z_{r} \geq 0$ left- and right eigenvectors of $A$ corresponding to the eigenvalue 0, respectively (the nonnegativity of $z_{\ell}$, $z_{r}$ being a consequence of Perron's theorem, \cite[Theorem 8.3.1]{HornJohnson1990}). If $A$ is symmetric (or more generally, normal), we will write $z = z_{\ell} = z_{r}$. As usual, $A = \rho(H) I - H$ for $H \geq 0$.

\begin{remark}
A sufficient condition for the zero eigenvalue of $A$ to be algebraically simple, and hence geometrically simple, is that $A$ is irreducible (by the Perron-Frobenius theorem \cite[Theorem 8.4.4]{HornJohnson1990}). If this is the case, $z_{\ell}$ and $z_{r}$ have strictly positive entries. It is not a necessary condition, see e.g. Example~\ref{ex:two_dimensional} (b) below. The irreducible case is the typical case from the perspective of applications. 
\end{remark}

In partial solution to Problem~\ref{prob:stable}, we establish the following result, which is the main result of this section.

\begin{theorem}
\label{thm:main}
Let $A=\rho(H)I-H \in \R^{n \times n}$ be a singular M-matrix. Suppose that 0 is a geometrically simple eigenvalue of $A$. Let $v, w \in \R^n$ and suppose~\eqref{eq:nonzero_projection} holds. Then $A + v w^T$ is strictly positive stable in any of the following cases:
\begin{itemize}
 \item[(i)] $n = 2$, $v, w \geq 0$, and one of the following conditions hold:
 \begin{itemize}
 \item[(a)] the zero eigenvalue of $A$ is \emph{algebraically} simple;
\item[(b)] $w^T v > 0$.
\end{itemize}
\item[(ii)] $A v = 0$, or $w^T A = 0$;
\item[(iii)] $A$ is normal (in particular, if $A$ is symmetric) and $w = v$;
\item[(iv)] $v, w > 0$ and $2 h_{ij} \geq v_i w_j$ for all $i, j$ (where $h_{ij}$ denotes the $i,j$-entry of $H$, and likewise $v_i$, respectively, $w_j$, denote the $i^{th}$ entry of $v$, respectively, the $j^{th}$ entry of $w$);
\item[(v)] There exists a nonnegative matrix $K \in \R^{n \times n}$, such that $H - v w^T \leq K$ (where the inequality is meant to hold element-wise), and
\begin{equation} \label{eq:Fan_inequality} \rho(K) + h_{ii} < \rho(H) + k_{ii} + v_i w_i \quad \mbox{for all $i = 1, \dots, n$.}\end{equation}
\item[(vi)] $A$ is symmetric, and $\frac{||v|| \ ||w||}{2} < \left(\frac{\alpha}{1-\alpha} \right)^{1/2} \tau$, where $\tau = \min \left\{ \mu \in \sigma(A) : \mu \neq 0 \right\}$ and 
\begin{equation} 
\label{eq:alpha} \alpha :=\frac{ \left| \langle v, z  \rangle \langle w, z \rangle \right|}{||v|| \ ||w|| \ ||z||^2},
\end{equation}
where $z=z_{\ell}=z_{r}$. We assume here that $v$ and $w$ are not both parallel to $z$, so that $\alpha < 1$. (If either $v \parallel z$ or $w \parallel z$, then (ii) of this theorem may be applied to deduce strict positive stability).
\end{itemize}
\end{theorem}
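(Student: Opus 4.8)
The plan is to prove the six cases with three different techniques, reserving case (vi) for a separate and more delicate estimate. Throughout, Lemma~\ref{lem:nonsingular} together with the standing hypothesis~\eqref{eq:nonzero_projection} guarantees $0\notin\sigma(A+vw^T)$, and the defining property of a singular M-matrix guarantees that every nonzero eigenvalue of $A$ already lies in the open right half-plane: if $\mu\in\sigma(H)$ with $\mu\neq\rho(H)$ then $\Re(\rho(H)-\mu)\geq\rho(H)-|\mu|\geq 0$, with equality forcing $\mu=\rho(H)$. The nonnegativity of $v,w$ and of the Perron eigenvectors $z_{\ell},z_r\geq 0$ will be used repeatedly to fix signs; in particular $\langle v,z\rangle,\langle w,z\rangle\geq 0$ in the symmetric cases.

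For (i) I would use that a real $2\times 2$ matrix is strictly positive stable iff its trace and determinant are both positive. Here $\tr(A+vw^T)=\tr A+w^Tv$, and since $\tr A$ equals the remaining (necessarily nonnegative) eigenvalue of $A$, positivity of the trace follows either from algebraic simplicity of $0$ (case (a), so $\tr A>0$) or directly from $w^Tv>0$ (case (b)). For the determinant I would expand $\det(A+vw^T)=\det A+w^T\operatorname{adj}(A)v=w^T\operatorname{adj}(A)v$; since $A$ has rank one, $\operatorname{adj}(A)=\beta\,z_rz_{\ell}^T$ with $\beta>0$ (using $\operatorname{adj}(A)\geq 0$ for a $2\times 2$ M-matrix), so the determinant equals $\beta(w^Tz_r)(z_{\ell}^Tv)>0$ by~\eqref{eq:nonzero_projection} and nonnegativity. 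Case (ii) is immediate from Proposition~\ref{prop:spectrum_v_eigenvector}: if $Av=0$ then $v$ is a right eigenvector of $H$ for $\rho(H)$, so $\sigma(A+vw^T)$ consists of the nonzero eigenvalues of $A$ (already in the right half-plane) together with $w^{\star}v=w^Tv$, which is positive because $v$ is a positive multiple of the nonnegative vector $z_r$ and $w\geq 0$; the case $w^TA=0$ is symmetric. Case (iii) is exactly Proposition~\ref{prop:normal}(i) applied with $C=I$.

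For (iv) and (v) I would invoke the Perron-scaled Gershgorin (Ky Fan) eigenvalue-inclusion bound: if $K\geq 0$ dominates the off-diagonal entries of $B:=A+vw^T$ in modulus, then every eigenvalue $\lambda$ of $B$ satisfies $\Re\lambda\geq\min_i\bigl(b_{ii}-\rho(K)+k_{ii}\bigr)$, obtained by conjugating $B$ with $\diag(u)$ for the Perron vector $u$ of $K$ and applying Gershgorin (a continuity argument extends this to reducible $K$). Since $b_{ii}=\rho(H)-h_{ii}+v_iw_i$, this lower bound is positive precisely when $\rho(K)+h_{ii}<\rho(H)+k_{ii}+v_iw_i$ for all $i$, which is~\eqref{eq:Fan_inequality}; this gives (v), the point where the comparison $H-vw^T\leq K$ enters being the verification of the modulus domination. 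Case (iv) is the special choice $K=H$: the hypothesis $0\leq v_iw_j\leq 2h_{ij}$ is exactly the statement that $H$ dominates the off-diagonal entries of $B$ in modulus, and the bound then collapses to $\Re\lambda\geq\min_i v_iw_i>0$.

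Case (vi) is the most delicate and I expect it to be the main obstacle. Here I would not locate eigenvalues directly but instead show that the symmetric part $\tfrac12(B+B^T)=A+\tfrac12(vw^T+wv^T)$ is positive definite, which forces $\Re\lambda\geq\lambda_{\min}\!\bigl(\tfrac12(B+B^T)\bigr)>0$ for every $\lambda\in\sigma(B)$ (the field-of-values/Lyapunov argument already used for Proposition~\ref{prop:normal}). Decomposing $\R^n=\operatorname{span}(z)\oplus z^{\perp}$ and using $\langle x,Ax\rangle\geq\tau\,\|x\|^2$ for $x\perp z$ while $Az=0$, the form $A+\tfrac12(vw^T+wv^T)$ has $(z,z)$-entry equal to $\langle v,z\rangle\langle w,z\rangle/\|z\|^2>0$ by~\eqref{eq:nonzero_projection} and nonnegativity; a Schur-complement reduction with respect to this one-dimensional block turns positive definiteness into a single rank-two inequality on $z^{\perp}$ controlled by $\tau$, $\|v\|\,\|w\|$, and the alignment parameter $\alpha$ of~\eqref{eq:alpha}. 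The hard part will be carrying out this estimate sharply: one must optimize over the perpendicular directions so that the threshold comes out to be exactly $\tfrac12\|v\|\,\|w\|<\bigl(\tfrac{\alpha}{1-\alpha}\bigr)^{1/2}\tau$, the separation between the $z$-direction (where $A$ is singular but the perturbation is aligned, as measured by $\alpha$) and the complementary directions (where $A$ is coercive with constant $\tau$) being precisely what makes the constant take this form.
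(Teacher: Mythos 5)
Your cases (i)--(v) are essentially correct and essentially the paper's own route: (i) is the same trace/determinant argument (the paper expands the determinant directly and invokes Lemma~\ref{lem:nonsingular} for strictness, you use the adjugate; same substance), (ii) and (iii) are quoted from the same propositions, and your Perron-scaled Gershgorin bound is exactly Fan's theorem, which is the paper's proof of (v); deriving (iv) from (v) with $K=H$ is explicitly sanctioned by the paper's remark (the paper gives a separate comparison-matrix/H-matrix proof of (iv) only because that proof is reused for Corollary~\ref{cor:d-stable}).

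Case (vi) is where the proposal has a genuine gap, and it is not merely an uncompleted computation: the strategy cannot work. You propose to show that the symmetric part $S=A+\tfrac12(vw^T+wv^T)$ is positive definite. That is a strictly stronger property than strict positive stability, and the hypothesis of (vi) does not imply it. Take $A=\diag(0,\tau)$ with $\tau=2.4$ (so $H=\diag(2.4,0)$, $z=e_1$), $v=(1,\,0.01)^T>0$, $w=(0.1,\,0.995)^T>0$. Then $\|v\|\,\|w\|/2\approx 0.500$, $\alpha\approx 0.100$ and $(\alpha/(1-\alpha))^{1/2}\tau\approx 0.800$, so the hypothesis of (vi) and \eqref{eq:nonzero_projection} hold; yet
\[
\det S \;=\; v_1w_1(\tau+v_2w_2)-\Bigl(\tfrac12(v_1w_2+v_2w_1)\Bigr)^2 \;=\; 0.1\cdot 2.40995-(0.498)^2 \;<\;0,
\]
so $S$ is indefinite (the conclusion of (vi) is of course still true here, $A+vw^T$ having positive diagonal and a zero $(2,1)$-entry up to the small perturbation). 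The two conditions even scale differently in $\tau$ (positive definiteness of $S$ is linear in $\tau$, the hypothesis of (vi) is quadratic), so no Schur-complement optimization over $z^{\perp}$ will reproduce the stated threshold. The paper's actual argument is of a different nature: a homotopy $\Gamma(t)=A+tvw^T$ on $t\in[0,1]$. Lemma~\ref{lem:notimmediately} gives strict positive stability for small $t>0$ (using Lemma~\ref{lem:real_eigenvalues_nonnegative} to exclude negative real eigenvalues), so if $\Gamma(1)$ were not strictly positive stable some eigenvalue would cross the imaginary axis at a point $\i b$, $b>0$, for some $t_1\in(0,1]$. Lemma~\ref{lem:spectrumMmatrixplusrankone} converts this into the resolvent equation $w^T(\i b I-A)^{-1}(t_1v)=1$; splitting the resolvent along $P=zz^T/\|z\|^2$ and $I-P$ and estimating the imaginary and real parts separately (Lemma~\ref{lem:estimate_imaginary_eigenvalue}) forces $(\alpha/(1-\alpha))^{1/2}\tau\le b\le\|v\|\,\|w\|/2$, contradicting the hypothesis. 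If you want to keep a quadratic-form flavour you must apply it to the resolvent at $\i b$, not to $B$ itself.
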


\begin{proof}
\begin{itemize}
\item[(i)] For $n=2$, if 0 is an algebraically simple eigenvalue of $A$, it follows that $A$ has one other eigenvalue $\mu \neq 0$. Then $\mu$ is real-valued because complex eigenvalues of real matrices would come in conjugate pairs, and since $A$ is an M-matrix, $\mu > 0$. In either of the cases (algebraically simple eigenvalue, or $w^T v > 0$) we have $\tr(A + v w^T) = \tr(A)+\tr(vw^T)=
 \mu + w^T v > 0$. Furthermore
\begin{align*}
\det (A + v w^T) & = (a_{11} + v_1 w_1) (a_{22} + v_2 w_2) - (a_{12} + v_1 w_2) (a_{21} + v_2 w_1) \\
 & =v_1 w_1 a_{22} + v_2 w_2 a_{11} -a_{12} v_2 w_1 - a_{21} v_1 w_2 \geq 0
\end{align*}
using that $\det(A) = 0$, $v \geq 0$, $w \geq 0$.
By Lemma~\ref{lem:nonsingular} and the conditions $z_{\ell}^T v \neq 0$, $w^T z_{r} \neq 0$, we have in fact $\det(A + v w^T) > 0$.

It follows that $\tr(A + v w^T) > 0$ and $\det(A + v w^T) > 0$, so that the eigenvalues of $A + v w^T$ are in the open right half plane.
\item[(ii)] This is a special case of Proposition~\ref{prop:spectrum_v_eigenvector}.
\item[(iii)] This is a special case of Proposition~\ref{prop:normal}.
\item[(iv)] The comparison matrix $M[B] = [m_{ij}]$ of a matrix $B$ is defined as (see \cite{HornJohnson1994}, Definition 2.5.10)
\[ m_{ij} = \left\{ \begin{array}{ll} |b_{ij}| \quad & \mbox{if} \ i = j, \\ - |b_{ij}| \quad & \mbox{if} \ i \neq j. \end{array} \right. \]
Applied to our case, the comparison matrix $M = M [A + v w^T]$ has entries
\[ m_{ij} = \left\{ \begin{array}{ll} |a_{ii} + v_i w_i| = a_{ii} + v_i w_i, \quad & \mbox{for} \ i = j, \ \mbox{and} \\
                     - |a_{ij} + v_i w_j| = - |v_i w_j - h_{ij}| \geq - h_{ij} = a_{ij}, \quad & \mbox{for} \ i \neq j,
                    \end{array} \right. \]
where we used $2h_{ij} \geq v_i w_j$.
Note that, since $2 h_{ij} \geq v_i w_j > 0$ for all $i,j$, by Perron's theorem $z_{r} > 0$.
Pick $E = \diag(z_{r})$, then $M E$ is strictly row diagonally dominant (using $v_i w_i > 0$), and so $M$ is an M-matrix (\cite[Theorem 2.5.3]{HornJohnson1994}). Furthermore, $A + v w^T$ has positive entries on the diagonal: $a_{ii} + v_i w_i > a_{ii} \geq 0$, using that $A$ has nonnegative entries on the diagonal and $v, w$ are positive. Therefore $A + vw^T$ is an H-matrix with positive entries on the diagonal, so that it is strictly positive stable (see \cite{HornJohnson1994}, p. 124).
\item[(v)] The proof consists of an application of Fan's theorem, \cite[Theorem 8.2.12]{HornJohnson1990}: for a nonnegative matrix $K$ and a matrix $M$ satisfying $|m_{ij}| \leq k_{ij}$ for all $i,j =1, \dots, n$, the eigenvalues of $M$ are contained in 
\[ \bigcup_{i=1}^n \{ z \in \C : |z - m_{ii}| \leq \rho(K) - k_{ii} \}.\]
Applying this result for $M = H - v w^T$, we find that for any eigenvalue $\lambda \in \C$ of $M$,
\[ \Re \lambda \leq \max_{i} \left[ m_{ii} + \rho(K) - k_{ii} \right] = \max_{i} \left[h_{ii} - v_i w_i + \rho(K) - k_{ii} \right] < \rho(H).\]
Hence the spectrum of $\rho(H) I - H + v w^T = \rho(H) I - M$ is contained in the open right halfplane.
\item[(vi)] For the proof of this result, a number of lemmas are needed. The proof will be continued below.
\end{itemize}
\end{proof}

\begin{remark}
Theorem~\ref{thm:main} (iv) may also be considered as a consequence of Theorem~\ref{thm:main} (v), by taking $K = H$. However, the proof of (iv) given above will play a role in the proof of Corollary~\ref{cor:d-stable}.
\end{remark}

\begin{lemma}
\label{lem:spectrumMmatrixplusrankone}
Let $A \in \R^{n \times n}$ and $v, w \in \R^n$.
Then for all $\mu \in \C$, $\mu \notin \sigma(A)$, we have that $\mu \in \sigma(A + v w^T)$ if and only if $w^T (\mu I -A)^{-1} v = 1$.
\end{lemma}

\begin{proof}
Let $\mu \in \C$, $\mu \notin \sigma(A)$, so that $\mu I - A$ is invertible.
It may be verified that, if $w^T (\mu I - A)^{-1} v \neq 1$, the inverse of $\mu I - (A + v w^T)$ exists and is given by
\[ (\mu I - A)^{-1}\left(I + \frac {v w^T} {1 - w^T (\mu I - A)^{-1} v}  (\mu I - A)^{-1} \right),\]
which may be verified by computation; see also \cite[Section 0.7.4]{HornJohnson1990}.
So in this case $\mu \notin \sigma(A + v w^T)$.
Conversely, if $w^T (\mu I - A)^{-1} v = 1$ then $v \neq 0$ and $(\mu I - A - v w^T)(\mu I - A)^{-1} v = 0$, so that $\mu \in \sigma (A + v w^T)$.
\end{proof}

\begin{lemma}
\label{lem:real_eigenvalues_nonnegative}
Suppose $A$ is a (singular or nonsingular) M-matrix, $v, w \in \R^n$, $v \geq 0$, $w \geq 0$. Suppose $\mu$ is a real-valued eigenvalue of $A + v w^T$. Then $\mu \geq 0$.
\end{lemma}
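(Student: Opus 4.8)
The plan is to argue by contradiction, combining the resolvent characterization of eigenvalues in Lemma~\ref{lem:spectrumMmatrixplusrankone} with the classical fact that a \emph{nonsingular} M-matrix has an entrywise nonnegative inverse (see \cite[Chapter 6]{BermanPlemmons1994}). Write $A = \gamma I - P$ with $P \geq 0$ and $\gamma \geq \rho(P)$, and suppose toward a contradiction that $\mu < 0$ is a real eigenvalue of $A + v w^T$.

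First I would check that $\mu \notin \sigma(A)$, so that the resolvent is defined and Lemma~\ref{lem:spectrumMmatrixplusrankone} applies. Indeed, every eigenvalue of $A = \gamma I - P$ has the form $\gamma - \lambda$ with $\lambda \in \sigma(P)$, and $\Re(\gamma - \lambda) = \gamma - \Re \lambda \geq \gamma - \rho(P) \geq 0$; hence $A$ is positive stable and a strictly negative real number cannot lie in $\sigma(A)$. By Lemma~\ref{lem:spectrumMmatrixplusrankone}, the assumption $\mu \in \sigma(A + v w^T)$ is then equivalent to $w^T (\mu I - A)^{-1} v = 1$.

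Next I would analyze the sign pattern of the resolvent at $\mu$. The key algebraic identity is
\[
\mu I - A = (\mu - \gamma) I + P = -\bigl( (\gamma - \mu) I - P \bigr).
\]
Since $\mu < 0$ we have $\gamma - \mu > \gamma \geq \rho(P)$, so $(\gamma - \mu) I - P$ is a nonsingular M-matrix and therefore has a nonnegative inverse. Consequently $(\mu I - A)^{-1} = -\bigl( (\gamma - \mu) I - P \bigr)^{-1} \leq 0$ entrywise. Using $v \geq 0$ and $w \geq 0$, it follows that $w^T (\mu I - A)^{-1} v \leq 0$, which contradicts the eigenvalue condition $w^T (\mu I - A)^{-1} v = 1$. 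Hence no negative real eigenvalue exists, i.e.\ $\mu \geq 0$.

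I do not expect any serious obstacle here: the only nontrivial ingredient is the inverse-nonnegativity of nonsingular M-matrices, which is standard, and everything else is careful bookkeeping of signs together with the already-established resolvent criterion. The one point worth stating explicitly is the verification that $\mu < 0$ forces $(\gamma - \mu) I - P$ into the \emph{strictly} nonsingular regime $\gamma - \mu > \rho(P)$, which is exactly what legitimizes taking the nonnegative inverse even when $A$ itself is a singular M-matrix (the case $\gamma = \rho(P)$).
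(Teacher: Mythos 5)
Your argument is correct and is essentially the paper's own proof: both reduce to observing that for $\mu<0$ the matrix $A-\mu I$ is a nonsingular M-matrix with entrywise nonnegative inverse, so $w^T(\mu I - A)^{-1}v \leq 0$ cannot equal $1$ as Lemma~\ref{lem:spectrumMmatrixplusrankone} would require. Your version merely spells out the justification that $\mu \notin \sigma(A)$ and the shift $\gamma-\mu > \rho(P)$ in more detail.
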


\begin{proof}
Suppose $\mu < 0$. Note that $\mu \notin \sigma(A)$. Now $A - \mu I$ is a nonsingular M-matrix, so $\left(A - \mu I \right)^{-1} \geq 0$, as can be seen by expanding the inverse as a Neumann series; see also \cite[Theorem 2.5.3]{HornJohnson1994}. If $\mu \in \sigma(A  + v w^T)$, then by Lemma~\ref{lem:spectrumMmatrixplusrankone}, $w^T(\mu I -  A)^{-1} v = 1$, which is a contradiction.
\end{proof}

\begin{lemma}
\label{lem:notimmediately}
Let $A$ be a singular M-matrix and suppose that $0$ is an algebraically simple eigenvalue of $A$.
Let $v$, $w \in \R^n$, $v, w \geq 0$, and suppose that condition~\ref{eq:nonzero_projection} holds.
Define a matrix-valued curve $\Gamma(t): t \mapsto A + t v w^T$, $t \in \R$.
There exists a $t_0 > 0$ such that $\Gamma(t)$ is strictly positive stable for $t \in (0, t_0]$.
\end{lemma}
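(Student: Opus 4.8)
The plan is to view $\Gamma(t)=A+t\,vw^T$ as a one-parameter analytic perturbation of $\Gamma(0)=A$ and to track the eigenvalue that sits at $0$ when $t=0$. First I would record the spectral picture of $A$ at $t=0$. Writing $A=\rho(H)I-H$, every eigenvalue of $A$ has the form $\rho(H)-\lambda$ with $\lambda\in\sigma(H)$ and $|\lambda|\le\rho(H)$, so $\Re(\rho(H)-\lambda)=\rho(H)-\Re\lambda\ge\rho(H)-|\lambda|\ge 0$, with equality forcing $\lambda=\rho(H)$. Hence $0$ is the \emph{only} eigenvalue of $A$ on the imaginary axis, it is algebraically simple by hypothesis, and the remaining $n-1$ eigenvalues lie in the open right half-plane. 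In particular $\delta:=\min\{\Re\mu:\mu\in\sigma(A),\ \mu\neq 0\}>0$.

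Because $0$ is an isolated, algebraically simple eigenvalue of $A$, standard analytic perturbation theory (e.g.\ via the Riesz projection on a small circle around $0$) provides, for $t$ near $0$, a single eigenvalue branch $\lambda(t)$ of $\Gamma(t)$ with $\lambda(0)=0$, depending analytically on $t$ and taking real values for real $t$ (as $\Gamma(t)$ is real and $0$ is a simple real eigenvalue). The first-order perturbation formula gives
\[
\lambda'(0) = \frac{z_{\ell}^T (v w^T) z_{r}}{z_{\ell}^T z_{r}} = \frac{(z_{\ell}^T v)(w^T z_{r})}{z_{\ell}^T z_{r}},
\]
where $z_{\ell},z_{r}\ge 0$ are the left and right eigenvectors of $A$ for the eigenvalue $0$.

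The crux of the argument is to show $\lambda'(0)>0$, and this is exactly where both hypotheses are used. Since $0$ is algebraically simple, the left and right eigenvectors are not orthogonal, so $z_{\ell}^T z_{r}\neq 0$; together with $z_{\ell},z_{r}\ge 0$ this forces $z_{\ell}^T z_{r}>0$. The factors $z_{\ell}^T v$ and $w^T z_{r}$ are nonnegative as inner products of nonnegative vectors, and condition~\eqref{eq:nonzero_projection} asserts that their product is nonzero, whence both are strictly positive. Therefore $\lambda'(0)>0$, and since $\lambda(0)=0$ we get $\Re\lambda(t)=\lambda'(0)\,t+o(t)>0$ for all sufficiently small $t>0$, say $t\in(0,t_1]$. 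I expect this positivity of the first derivative to be the only real obstacle: were \eqref{eq:nonzero_projection} to fail the derivative would vanish and a higher-order analysis would be needed, and algebraic simplicity is what guarantees a clean, analytic branch with the nonzero denominator $z_{\ell}^T z_{r}$.

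Finally I would dispose of the other $n-1$ eigenvalues by continuity. The same contour argument shows that for $t$ near $0$ the spectrum of $\Gamma(t)$ splits into the branch $\lambda(t)$ inside the small circle about $0$ together with $n-1$ eigenvalues lying within an $\varepsilon$-neighbourhood of $\sigma(A)\setminus\{0\}$; choosing $\varepsilon<\delta$ keeps these in the open right half-plane for $t\in(0,t_2]$. Setting $t_0=\min(t_1,t_2)$, all $n$ eigenvalues of $\Gamma(t)$ have positive real part for $t\in(0,t_0]$, so $\Gamma(t)$ is strictly positive stable. (Equivalently, one may avoid perturbation theory and locate the eigenvalue near $0$ through Lemma~\ref{lem:spectrumMmatrixplusrankone}: the condition $t\,w^T(\mu I-A)^{-1}v=1$, combined with the expansion $(\mu I-A)^{-1}=\mu^{-1}\,z_{r}z_{\ell}^T/(z_{\ell}^T z_{r})+O(1)$ near $\mu=0$, yields $\mu\approx t\,(z_{\ell}^T v)(w^T z_{r})/(z_{\ell}^T z_{r})>0$, recovering the same first-order behaviour.)
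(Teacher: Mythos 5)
Your proof is correct, and it reaches the conclusion by a genuinely different route at the decisive step. You and the paper share the same skeleton: isolate the algebraically simple eigenvalue at $0$, note that the branch $\lambda(t)$ it spawns must be real (the conjugate-pair argument), and keep the remaining $n-1$ eigenvalues in the open right half-plane by continuity. Where you diverge is in why $\lambda(t)>0$ for small $t>0$. You compute the first-order perturbation coefficient $\lambda'(0)=(z_{\ell}^T v)(w^T z_{r})/(z_{\ell}^T z_{r})$ and show it is positive, using that $z_{\ell}^T z_{r}\neq 0$ for an algebraically simple eigenvalue together with the nonnegativity of $z_{\ell}$, $z_{r}$, $v$, $w$ and condition~\eqref{eq:nonzero_projection}. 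The paper never differentiates: it invokes Lemma~\ref{lem:real_eigenvalues_nonnegative} (any real eigenvalue of an M-matrix plus a nonnegative rank-one term is nonnegative, proved via the nonnegativity of the resolvent $(A-\mu I)^{-1}$ for $\mu<0$ and Lemma~\ref{lem:spectrumMmatrixplusrankone}) to get $\lambda(t)\geq 0$, and then Lemma~\ref{lem:nonsingular} with~\eqref{eq:nonzero_projection} to exclude $\lambda(t)=0$. Your argument buys a quantitative rate of entry into the right half-plane and makes transparent exactly where~\eqref{eq:nonzero_projection} is needed (without it the derivative vanishes); the paper's argument is softer --- only continuous dependence of eigenvalues, no analytic branch or derivative formula --- and leans instead on the M-matrix structure, which is why it routes through the auxiliary Lemmas~\ref{lem:spectrumMmatrixplusrankone} and~\ref{lem:real_eigenvalues_nonnegative}. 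Both uses of the hypotheses are legitimate, and your closing parenthetical about expanding the resolvent near $\mu=0$ is essentially the bridge between the two proofs.
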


\begin{proof}
Since $A$ is an M-matrix, the real parts of the nonzero eigenvalues are positive. Let $\varepsilon > 0$ such that zero is the only eigenvalue of $\Gamma(0)$ in a open disc $B(0, \varepsilon) \subset \C$ of radius $\varepsilon$ around 0 (and note that the eigenvalue has algebraic multiplicity one). By continuous dependence of the eigenvalues on the entries of a matrix \cite[Appendix D]{HornJohnson1990}, we may choose $t_1$ sufficiently small so that the number of eigenvalues of $\Gamma(t)$ in $B(0, \varepsilon)$ remains equal to one for $0 \leq t < t_1$. Let $0 \leq t < t_1$ and write $\lambda(t)$ for the unique element in $\sigma(\Gamma(t)) \cap B(0, \varepsilon)$. Since $\Gamma(t)$ is realvalued, it follows that $\lambda(t) \in \R$ (otherwise there would be a conjugate eigenvalue, also in $B(0, \varepsilon)$). By Lemma~\ref{lem:real_eigenvalues_nonnegative}, $\lambda(t) \geq 0$. By condition~\eqref{eq:nonzero_projection} and Lemma~\ref{lem:nonsingular} in fact $\lambda(t) > 0$ for all $0 \leq t < t_1$. Now choose $t_0 \leq t_1$ so that all other eigenvalues of $\Gamma(t)$ remain to be contained in the open right half plane for $0 \leq t \leq t_0$ (again using the continuity of the eigenvalues as function of $t$).
\end{proof}

\begin{lemma}
\label{lem:estimate_imaginary_eigenvalue}
Let $A \in \R^{n \times n}$ be positive semidefinite. Suppose that zero is an algebraically simple eigenvalue of $A$ and let $\tau$ the smallest nonzero eigenvalue of $A$.
Let $z$ be an eigenvector of $A$ corresponding to the zero eigenvalue. Let $v, w \in \R^n$, $v \neq 0$, $w \neq 0$.
Suppose $\mu = b \i$, with $b > 0$, is an eigenvalue of $A + v w^T$. Let $\alpha$ be given by~\eqref{eq:alpha}. Then
\[ \left( \frac{ \alpha}{1 - \alpha}\right)^{1/2} \tau \leq b \leq \frac{ ||v|| \ ||w||}{2}.\]
\end{lemma}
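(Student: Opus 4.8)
The plan is to apply Lemma~\ref{lem:spectrumMmatrixplusrankone} and then exploit the spectral decomposition of $A$. Since $A$ is positive semidefinite (and hence symmetric with nonnegative real spectrum), we have $b\,\i \notin \sigma(A)$ for $b > 0$, so the lemma gives the scalar characterization
\[ w^T (b\,\i\, I - A)^{-1} v = 1. \]
First I would fix an orthonormal basis of eigenvectors $e_0, \dots, e_{n-1}$ of $A$ with $A e_j = \lambda_j e_j$, arranged so that $\lambda_0 = 0$ and $e_0 = z/\|z\|$, while $\lambda_j \geq \tau$ for $j \geq 1$. Writing $a_j := (w^T e_j)(v^T e_j) \in \R$, the resolvent expansion turns the displayed equation into $\sum_{j} \frac{a_j}{b\,\i - \lambda_j} = 1$. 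Separating real and imaginary parts via $\frac{1}{b\,\i - \lambda_j} = \frac{-\lambda_j - b\,\i}{\lambda_j^2 + b^2}$ (and using $b > 0$) yields the two scalar equations
\[ -\sum_{j \geq 1} \frac{a_j \lambda_j}{\lambda_j^2 + b^2} = 1, \qquad \frac{a_0}{b^2} = -\sum_{j \geq 1} \frac{a_j}{\lambda_j^2 + b^2}, \]
where the $j = 0$ term drops from the first equation because $\lambda_0 = 0$.

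The two elementary inequalities I would keep in reserve are the coordinatewise Cauchy--Schwarz bound $\sum_j |a_j| \leq \|v\| \, \|w\|$ (from $\sum_j (w^T e_j)^2 = \|w\|^2$ and $\sum_j (v^T e_j)^2 = \|v\|^2$) and the AM--GM estimate $\frac{\lambda_j}{\lambda_j^2 + b^2} \leq \frac{1}{2b}$. For the upper bound, take absolute values in the first equation:
\[ 1 \leq \sum_{j \geq 1} \frac{|a_j| \lambda_j}{\lambda_j^2 + b^2} \leq \frac{1}{2b} \sum_{j \geq 1} |a_j| \leq \frac{\|v\| \, \|w\|}{2b}, \]
which rearranges to $b \leq \frac{\|v\| \, \|w\|}{2}$.

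For the lower bound I would first record that $|a_0| = |\langle v, z\rangle \langle w, z\rangle| / \|z\|^2 = \alpha \|v\| \, \|w\|$. Taking absolute values in the second equation and using $\lambda_j^2 + b^2 \geq \tau^2 + b^2$ for $j \geq 1$,
\[ \frac{\alpha \|v\| \, \|w\|}{b^2} = \frac{|a_0|}{b^2} \leq \sum_{j \geq 1} \frac{|a_j|}{\lambda_j^2 + b^2} \leq \frac{1}{\tau^2 + b^2} \sum_{j \geq 1} |a_j| \leq \frac{\|v\| \, \|w\|}{\tau^2 + b^2}. \]
Cancelling $\|v\| \, \|w\|$ and cross-multiplying gives $\alpha(\tau^2 + b^2) \leq b^2$, i.e. $b^2 \geq \frac{\alpha}{1 - \alpha}\tau^2$ (using $1 - \alpha > 0$), which is the claimed lower bound.

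The only real content is assembling the two scalar equations correctly; the subsequent bounds are routine. The step to watch is the reduction to the scalar equation, which relies on $A$ being symmetric so that it admits an orthonormal eigenbasis and a real spectrum — this is exactly why the hypothesis is positive semidefiniteness rather than a general M-matrix. The genuinely delicate bookkeeping is the role of the $j = 0$ term: it vanishes in the real-part equation, which drives the upper bound, but it is precisely the term carrying $\alpha$ in the imaginary-part equation, which drives the lower bound. I note that a sharper tail estimate $\sum_{j \geq 1} |a_j| \leq \|v\| \, \|w\| - |a_0|$ would improve the constant to $\frac{\alpha}{1 - 2\alpha}$ when $\alpha < \tfrac{1}{2}$, but the cruder estimate used above is valid on the full range $\alpha < 1$ and suffices for the stated conclusion.
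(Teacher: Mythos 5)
Your proof is correct and follows essentially the same route as the paper's: both reduce to the scalar equation $w^T(b\,\i\,I-A)^{-1}v=1$ via Lemma~\ref{lem:spectrumMmatrixplusrankone}, split the resolvent along the $z$-direction versus its orthogonal complement, and extract the upper bound from the real part and the lower bound from the imaginary part. The paper phrases the estimates with the orthogonal projection $P=zz^T/\|z\|^2$ and operator-norm bounds where you use the eigenbasis expansion and Cauchy--Schwarz, but these are the same computation.
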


\begin{proof}
Let $0 = \lambda_1 < \tau = \lambda_2 \leq \lambda_3 \leq \hdots \lambda_n$ denote the eigenvalues of $A$, listed multiple times according to their algebraic multiplicity. The eigenvalues of $(A - b \i I)^{-1}$ are
\[ (\lambda_k - b \i)^{-1} = \frac{\lambda_k  + b \i}{\lambda_k^2 + b^2}, \quad k = 1, \hdots, n,\]
so that the eigenvalues of $\Im (A - b \i I)^{-1}$ are given by
\begin{equation}
\label{eq:imaginary_parts} \frac{b}{\lambda_k^2 + b^2}, \quad k = 1, \hdots, n, 
\end{equation}
and the eigenvalues of $\Re (A - b \i I)^{-1}$ are given by
\begin{equation}
 \label{eq:real_parts} \frac{\lambda_k}{\lambda_k^2 + b^2}, \quad k = 1, \hdots, n.
\end{equation}
\emph{Proof of the lower bound for $b$:}
Let $P = z z^T / ||z||^2$ denote the orthogonal projection in the direction of $z$. Note that $A$ and $P$ commute (in fact $AP = PA = 0$), and 
\[ P (b \i I-A)^{-1} P = -\frac {\i} b P .\]
Furthermore,
\begin{equation} \label{eq:estimate_imaginary_part_residual} \left|\left| \Im (I-P) (b \i I-A)^{-1} (I-P) \right|\right| = \frac{b}{\tau^2 + b^2},\end{equation}
using that the norm of a symmetric real matrix is equal to its largest eigenvalue. We have $P (b \i I - A)^{-1} (I-P) =(b \i I - A)^{-1} P(I-P) = 0$, and analogously $(I-P) (b \i I - A)^{-1} P = 0$, so that  
\begin{align} \nonumber 0 & = \Im w^T (b \i I-A)^{-1} v \\
\nonumber & =  \Im \left[ w^T P(b \i I-A)^{-1} Pv +w^T(I-P)(b \i I-A)^{-1} Pv + w^T P(b \i I-A)^{-1} (I-P)v  \right. \\
\nonumber & \quad \quad \quad \left. +  w^T (I-P) (b \i I-A)^{-1} (I-P) v \right] \\
\nonumber & = \Im \left[ w^T P (b \i I-A)^{-1} P v +  w^T (I-P) (b \i I-A)^{-1} (I-P) v \right] \\
\label{eq:required_equality} & = -\frac{\langle w, z \rangle \langle v, z \rangle}{b ||z||^2} + \Im w^T (I-P) (b \i I-A)^{-1} (I-P) v.
\end{align}
In order for $\Im w^T (b \i  I-A)^{-1} v = 0$ (as required by Lemma~\ref{lem:spectrumMmatrixplusrankone}), we therefore require that
\[ \frac{\langle w, z \rangle \langle v, z \rangle}{b ||z||^2} = \Im w^T (I-P) (b \i I-A)^{-1} (I-P) v.\]
The righthand side may, by~\eqref{eq:estimate_imaginary_part_residual}, be estimated as 
\[ \left| \Im w^T (I-P) (b \i I-A)^{-1} (I-P) v \right| \leq ||v|| \ ||w|| \frac{b}{\tau^2 + b^2}.\]
Hence
\[  \frac{ \left| \langle w, z \rangle \langle v, z \rangle \right| }{b ||z||^2} \leq ||v|| \ ||w|| \frac{b}{\tau^2 + b^2},\] which is seen to be equivalent to the lower bound by elementary algebraic manipulation.

\emph{Proof of the upper bound for $b$:}
By~\eqref{eq:real_parts},
\[ ||\Re (b \i I-A)^{-1}|| \leq \max_{\xi \geq 0} \frac{\xi}{\xi^2 + b^2} = \frac 1 {2 b}.\]
Then the upper bound follows since we require, by Lemma~\ref{lem:spectrumMmatrixplusrankone}, that
\[ 1 = \Re w^T (b \i I-A)^{-1} v \leq \frac{||v||  \ ||w||}{2 b}.\]
\end{proof}

\begin{proof}[Proof of Theorem~\ref{thm:main} (vi):]
Consider the mapping $t \mapsto \Gamma(t) := A + t v w^T$ for $0 \leq t \leq 1$. It is our aim to show that $\Gamma(1)$ is strictly positive stable. 
Suppose now that $\Gamma(1)$ is not strictly positive stable, so that some eigenvalue of $\Gamma(1)$ lies in the closed left halfplane.
By Lemma~\ref{lem:notimmediately}, there exists a $t_0 > 0$ so that $\Gamma(t)$ is strictly positive stable for $0 < t \leq t_0$. If $t_0 \geq 1$ then we reach a contradiction with the assumption that $\Gamma(1)$ is not stable, so we may further assume that $t_0 < 1$. By continuous dependence of the eigenvalues of $\Gamma(t)$ on $t$ (see \cite[Appendix D]{HornJohnson1990}, the trajectory of at least one eigenvalue must cross the vertical axis in the complex plane, i.e. there exists a time $t_1 \geq 0$,  $t_0 < t_1 \leq 1$ such that $\Gamma(t_1)$ has a purely imaginary eigenvalue $\mu = \i b$ for some $b \in \R$. Because $\Gamma(t_1)$ is nonsingular by Lemma~\ref{lem:nonsingular} $b \neq 0$. Since $\Gamma(t_1)$ is real, there will exist a pair of conjugate eigenvalues, so we may assume $b > 0$. Note that $\alpha$ as given by~\eqref{eq:alpha} does not depend on the scaling of $v w^T$ by a scalar, so that the value of $\alpha$ for $\Gamma(t_1)$ is the same as that for $\Gamma(1)$. Also note that $\alpha < 1$, unless both $v$ and $w$ are multiples of $z$. By Lemma~\ref{lem:estimate_imaginary_eigenvalue}, $b \geq \tau \sqrt{\frac{\alpha}{1 - \alpha}}$. At the same time, $b \leq \frac{||v|| \ ||w|}{2}$. Under the assumptions of Theorem 2.7 (vi), this is a contradiction, so that $\Gamma(1)$ is strictly positive stable.
\end{proof}

\begin{example}
\label{ex:two_dimensional}
We consider some examples illustrating the conditions of case (i) of Theorem~\ref{thm:main}. 
\begin{itemize}
 \item[(a)] Let $A = \begin{bmatrix} 0 & -1 \\ 0 & 0 \end{bmatrix}$. Then $A$ is a singular M-matrix and 0 is geometrically simple but not algebraically simple. It has left- and right eigenvectors $z_{\ell}^T = \begin{bmatrix} 0 & 1 \end{bmatrix}$ and $z_{r} = \begin{bmatrix} 1 & 0 \end{bmatrix}^T$. Let $v = z_{\ell}$ and $w = z_{r}$. Then the conditions~\eqref{eq:nonzero_projection}, $n = 2$ and $v, w \geq 0$ of (i) are satisfied but $A$ is not algebraically simple and $v^T w = 0$. We have $A + v w^T = \begin{bmatrix} 0 & -1 \\ 1 & 0 \end{bmatrix}$ which has eigenvalues $\pm \i$; in particular, it is not strictly positive stable.
 \item[(b)] Suppose $A = \begin{bmatrix} 1 & -1 \\ 0 & 0 \end{bmatrix}$. Then $A$ is a singular M-matrix with eigenvalues 0 and 1. It has left-and right zero eigenvectors $z_{\ell}^T = \begin{bmatrix} 
 0 & 1 \end{bmatrix}$ and $z_{r} = \begin{bmatrix} 1 & 1 \end{bmatrix}^T$. Let $v = \begin{bmatrix} 1 & 0 \end{bmatrix}^T$ and $w = z_{r}$. Then all conditions of (i) are satisfied, except $z_{\ell}^T v > 0$, i.e.~\eqref{eq:nonzero_projection} does not hold. We have $A + v w^T = \begin{bmatrix} 2 & 0 \\ 0 & 0 \end{bmatrix}$ which has rank 1; in particular it is not strictly positive stable. This example also illustrates that the condition $z_{\ell}^T v\neq 0$, $w^T z_r \neq 0$ cannot be replaced by the condition $v^T w \neq 0$.
\end{itemize}
\end{example}

Theorem~\ref{thm:main} implies the following result concerning D-stability of $\rho(H) I - H + v w^T$.
\begin{corollary}
\label{cor:d-stable}
Let $A = \rho(H) I - H \in \R^{n \times n}$ be a singular M-matrix. Suppose that 0 is a geometrically simple eigenvalue of $A$. Let $v, w \in \R^n$ and suppose~\eqref{eq:nonzero_projection} holds. Then $A + v w^T$ is D-stable in any of the following cases:
\begin{itemize}
 \item[(i)] $n = 2$, $v, w \geq 0$, and one of the following conditions hold:
 \begin{itemize}
 \item[(a)] $A$ is irreducible;
 \item[(b)] $v, w > 0$.
 \end{itemize}
\item[(ii)] $v, w > 0$ and one of the following equivalent conditions holds:
\begin{itemize}
\item[(a)] There exists a positive diagonal matrix $D$ such that $D A D^{-1}$ and $D v w^T D^{-1}$ are symmetric;
\item[(b)] There exists a positive diagonal matrix $E$ such that $EA$ and $E v w^T$ are symmetric;
\item[(c)] For $E = \diag(w) \diag(v)^{-1}$, $EA$ is symmetric.
\end{itemize}
\item[(iii)] $v, w > 0$ and $2 h_{ij} \geq v_i w_j$ for all $i, j$.
\end{itemize}
\end{corollary}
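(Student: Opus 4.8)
The plan is to reduce every case to an application of Theorem~\ref{thm:main} (or Proposition~\ref{prop:normal}) through one uniform device. To prove D-stability I fix an arbitrary positive diagonal matrix $D$ and write
\[ D(A + vw^T) = DA + (Dv)w^T = \widetilde A + \widetilde v\, w^T, \qquad \widetilde A := DA, \quad \widetilde v := Dv. \]
As observed before Problem~\ref{prob:stable}, $\widetilde A$ is again a singular M-matrix (a positive diagonal times a singular M-matrix; one sees this by applying to the nonsingular M-matrix $A + \varepsilon I$, for which $D(A+\varepsilon I)$ is patently a nonsingular M-matrix, and letting $\varepsilon \downarrow 0$). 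Two invariances make the reduction work. Since $D$ is invertible, $\ker \widetilde A = \ker A$, so $0$ remains geometrically simple with the \emph{same} right null vector $z_r$, while its left null vector becomes $D^{-1}z_\ell$; consequently the projection quantity is unchanged,
\[ (z_\ell^T D^{-1})\,\widetilde v \cdot (w^T z_r) = (z_\ell^T v)(w^T z_r) \neq 0, \]
so \eqref{eq:nonzero_projection} holds for the perturbed triple $(\widetilde A, \widetilde v, w)$. It then suffices, case by case, to verify the hypotheses of the matching part of Theorem~\ref{thm:main} for $\widetilde A + \widetilde v w^T$.

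For (i), note $\widetilde v = Dv \geq 0$ and $w \geq 0$ are unchanged in sign, and multiplication by $D$ preserves the off-diagonal zero pattern; hence $A$ irreducible forces $\widetilde A$ irreducible and therefore (Perron--Frobenius) $0$ algebraically simple, which is case (i)(a); and $v,w>0$ forces $\widetilde v>0$ together with $w^T\widetilde v>0$, which is case (i)(b). For (ii) I would first establish the equivalence of (a), (b), (c): passing between $D$ and $E=D^2$ converts a symmetrizing similarity into a symmetrizing congruence and back, while the rank-one symmetry requirement $Evw^T=(Evw^T)^T$ forces $Ev\parallel w$, i.e. $E$ is a positive multiple of $\diag(w)\diag(v)^{-1}$, for which $Ev=w$ and $Evw^T=ww^T$ is automatically symmetric. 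Taking $E=\diag(w)\diag(v)^{-1}$ as in (c), $EA$ is a symmetric singular M-matrix, so $E(A+vw^T)=EA+ww^T$ is a symmetric rank-one perturbation of the symmetric M-matrix $EA$; writing $EA=\rho(P)I-P$ for a symmetric $P\geq 0$ and factoring $D(A+vw^T)=(DE^{-1})(EA+ww^T)$ with the positive definite diagonal $C:=DE^{-1}$, Proposition~\ref{prop:normal}(i) (with $H=P$ normal and $v=w$, using $w^T z_r\neq 0$) yields strict positive stability.

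For (iii) one cannot simply quote Theorem~\ref{thm:main}(iv) for $\widetilde A + \widetilde v w^T$, because rewriting $\widetilde A=DA$ in the canonical form $\rho(\widetilde H)I-\widetilde H$ destroys the \emph{diagonal} inequalities $2\widetilde h_{ii}\geq \widetilde v_i w_i$; this is precisely why the \emph{proof} of (iv), not its statement, is needed. Instead I track the H-matrix property through $D$ directly. Setting $B:=A+vw^T$, the comparison matrices satisfy $M[DB]=D\,M[B]$, and the proof of Theorem~\ref{thm:main}(iv) produced $z_r>0$ with $M[B]z_r>0$; hence $M[DB]z_r=D\,M[B]z_r>0$, so the Z-matrix $M[DB]$ is a nonsingular M-matrix. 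Since $DB$ has positive diagonal entries $d_i(a_{ii}+v_iw_i)>0$, it is an H-matrix with positive diagonal and therefore strictly positive stable.

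The main obstacle is the bookkeeping in (ii): proving the equivalence of the three symmetrizability conditions and checking that the symmetrization collapses the perturbation exactly to $ww^T$, so that Proposition~\ref{prop:normal} applies with $v=w$. A secondary, more conceptual point is the observation in (iii) that the diagonal hypothesis of (iv) is not preserved under left multiplication by $D$, which forces one to re-run the H-matrix argument with the identity $M[DB]=D\,M[B]$ rather than invoke (iv) as a black box.
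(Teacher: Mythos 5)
Your proof is correct and follows essentially the same route as the paper: each case is reduced to the corresponding part of Theorem~\ref{thm:main} (or Proposition~\ref{prop:normal}) by checking that the hypotheses survive left-multiplication by a positive diagonal matrix, the equivalence of (ii-a)--(ii-c) is proved via the forced form $E = c\,\diag(w)\diag(v)^{-1}$, and (iii) is handled through the H-matrix property. If anything you are more explicit where the paper is terse: in (ii) you correctly carry the positive definite factor $DE^{-1}$ through Proposition~\ref{prop:normal}(i) (the paper's one-line similarity remark, read literally, only gives stability of $A+vw^T$ itself rather than of $D(A+vw^T)$), and in (iii) you re-derive the D-stability of H-matrices with positive diagonal via the identity $M[DB]=D\,M[B]$ where the paper simply cites \cite[p.~124]{HornJohnson1994}.
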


\begin{proof}
\begin{itemize}
 \item[(i)] Note that (i-a) and (i-b) are invariant under left-multiplication by a positive diagonal matrix, and imply conditions (i-a) and (i-b) of Theorem~\ref{thm:main}, respectively.
\item[(ii)] To see the equivalence of the stated conditions under (ii), note that (ii-c) trivially implies (ii-b). We will show (ii-b) $\Rightarrow$ (ii-a) and (ii-a) $\Rightarrow$ (ii-c). Assume (ii-b) holds. Then $EA$ is congruent to $E^{1/2} A E^{-1/2}$, which is therefore symmetric; similarly for $E v w^T$. Now assume (ii-a) holds. Then $D^2 v w^T = D D v w^T D^{-1} D$ is symmetric. In particular, for all $i, j$, $d^2_i v_i w_j = d_j^2 v_j w_i$. Manipulation gives $d_i^2 v_i / w_i = d_j^2 v_j/w_j$ for all $i, j$, so that $d_i^2 v_i/w_i$ is constant in $i$. This shows that $D = c \diag(w)^{1/2} \diag(v)^{-1/2} = c E^{1/2}$ for some $c > 0$, with $E$ as in (ii-c). Now $E^{1/2} A E^{-1/2} = D A D^{-1}$ is symmetric, so by congruence, $E A$ is symmetric, and similarly for $E v w^T$, so that (ii-c) holds. Now (ii-a) states that $A + v w^T$ is similar to a matrix satisfying the conditions of Theorem~\ref{thm:main} (iii), so that it has the same spectrum.
\item[(iii)] By the proof of Theorem~\ref{thm:main} (iv), $A + v w^T$ is an H-matrix. Then it is D-stable, see \cite[Page 124]{HornJohnson1994}.
\end{itemize}

\end{proof}

\subsection{Counterexample and conjectures}
\label{sec:conjecture}
In view of Theorem~\ref{thm:main}, one might wonder whether \emph{any} $\rho(H) I - H + v w^T$, with $H$ nonnegative with $\rho(H)$ as geometrically simple eigenvalue, $v > 0$ and $w > 0$, is strictly positive stable, or equivalently D-stable. 
It is a challenging task to come up with numerical counterexamples to such a conjecture by randomly generated matrices. However, counterexamples do exist, as illustrated by the following example. 

\begin{example}
Let $H$, $v$ and $w$ be given by
\begin{equation}
\label{eq:counterexample}
H = \begin{bmatrix} 1.0 & 1.0 & 1.0 & 1.0 & 1.0 & 1.0 \\ 0 & 0.5 & 1.0 & 1.0 & 1.0 & 1.0 \\ 0 & 0 & 0 & 0.35 & 1.0 & 1.0 \\ 0 & 0 & 0.35 & 0 & 1.0 & 1.0 \\ 0 & 0 & 0 & 0 & 0 & 0.15 \\ 0 & 0 & 0 & 0 & 0.15 & 0 \end{bmatrix}, \quad v = \begin{bmatrix} 0.05 \\ 0.10 \\ 0.05 \\ 0.15 \\ 0.25 \\ 0.20 \end{bmatrix}, \quad w = \begin{bmatrix} 0.65 \\ 0.15 \\ 0.10 \\ 0.05 \\ 0.30 \\ 0.80 \end{bmatrix}.
\end{equation}
The spectral radius of $H$ is $\rho(H) = 1.000$ (up to four decimal places) and the zero eigenvalue of $\rho(H) I - H$ is algebraically simple. Condition~\eqref{eq:nonzero_projection} is satisfied since $v, w > 0$. However, the spectrum of $\rho(H) I - H + v w^T$ is given, in up to four decimals, by $-0.0093 \pm 0.9949 \i, 1.1649 \pm 0.2223 \i, 1.3460, 1.1377$. In particular $\rho(H) I - H + v w^T$ is \emph{not} strictly positive stable.

Note that $H$ is not irreducible; however it can be made irreducible by setting $h_{61} = \varepsilon > 0$. For $\varepsilon$ sufficiently small, e.g. $\varepsilon = 10^{-6}$, this does not alter the given approximations of the eigenvalues of $H$. Therefore strengthening the condition on $H$, demanding irreducibility, will not remedy this counterexample.
\end{example}

In view of this counterexample, we have to lower our expectations. We raise the following conjecture, which adds the requirement that $H$ is symmetric.

\begin{conjecture}
\label{conj:symmetric}
Suppose $H \in \R^{n\times n}$, $H \geq 0$ is symmetric, with geometrically simple eigenvalue $\rho(H)$, $v, w \in \R^n$, $v > 0$, $w > 0$. Then $\rho(H) I - H + v w^T$ is D-stable.
\end{conjecture}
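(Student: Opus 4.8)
The plan is to reduce the conjecture to a single clean spectral question and then to attack it by the homotopy method of Theorem~\ref{thm:main}(vi), replacing the crude norm bound of Lemma~\ref{lem:estimate_imaginary_eigenvalue} by a sharper, sign-based argument. First I would fix a positive diagonal matrix $D$ and, following the reduction in Problems~\ref{prob:D-stable}--\ref{prob:stable}, conjugate $D(A+vw^T)$ by $D^{1/2}$:
\[ D^{1/2}\,D(A+vw^T)\,D^{-1/2} = S + p\,q^T, \qquad S := D^{1/2} A D^{1/2}, \quad p := D^{3/2} v, \quad q := D^{-1/2} w. \]
Since $A$ is symmetric, $S$ is congruent to $A$ and hence is again a symmetric singular M-matrix (a positive semidefinite $Z$-matrix) whose zero eigenvalue is \emph{algebraically} simple (symmetry forces algebraic and geometric multiplicities to agree), with nonnegative eigenvector $z_S = D^{-1/2} z$, while $p, q > 0$. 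As $D$ ranges over all positive diagonal matrices, the conjecture is equivalent to the following claim: \emph{if $S$ is a symmetric singular M-matrix with algebraically simple eigenvalue $0$ and $p, q > 0$, then $S + p q^T$ is strictly positive stable.} Note that the symmetry of $H$ is used precisely to keep the conjugated matrix $S$ symmetric; the rank one term $pq^T$ is in general no longer symmetric, which is why Theorem~\ref{thm:main}(iii) does not apply.

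To prove this claim I would repeat the continuity argument of Theorem~\ref{thm:main}(vi). Because $S$ is a singular M-matrix, $p, q \ge 0$, and~\eqref{eq:nonzero_projection} holds (indeed $z_S^T p > 0$ and $q^T z_S > 0$), Lemma~\ref{lem:notimmediately} gives strict positive stability of $S + t\,pq^T$ for small $t > 0$, and Lemma~\ref{lem:nonsingular} keeps $S + t\,pq^T$ nonsingular for all $t > 0$. Hence if $S + pq^T$ were not strictly positive stable, continuity of the spectrum would force some $S + t_1 p q^T$ (with $0 < t_1 \le 1$) to have a nonzero purely imaginary eigenvalue $\i b$, $b > 0$. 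Since $t_1 q > 0$, it suffices to rule out, for every symmetric singular M-matrix $S$ and all $p, q > 0$, any eigenvalue of the form $\i b$ with $b \ne 0$. By Lemma~\ref{lem:spectrumMmatrixplusrankone} such an eigenvalue exists iff $q^T(\i b I - S)^{-1} p = 1$. Using the identity $(\i b I - S)^{-1} = -S(S^2 + b^2 I)^{-1} - \i b\,(S^2 + b^2 I)^{-1}$ and separating real and imaginary parts, this is equivalent to the pair
\[ q^T (S^2 + b^2 I)^{-1} p = 0 \qquad \text{and} \qquad q^T S (S^2 + b^2 I)^{-1} p = -1. \]
It therefore suffices to show these cannot hold simultaneously; the cleanest route is to prove that the first identity is impossible, i.e. that $q^T(S^2 + b^2 I)^{-1} p > 0$ whenever $p, q > 0$ and $b \ne 0$. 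This would follow immediately from entrywise nonnegativity of $(S^2 + b^2 I)^{-1}$, since then $r := (S^2 + b^2 I)^{-1} p \ge 0$ is nonzero and $q^T r > 0$.

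\textbf{The main obstacle} is precisely this sign statement. Although $S$ is a $Z$-matrix, $S^2$ generally has \emph{positive} off-diagonal entries (products of two nonpositive off-diagonal entries along length-two paths), so $S^2 + b^2 I$ need not be a $Z$-matrix and its inverse need not be nonnegative; the M-matrix structure of $S$ does not transfer to $S^2 + b^2 I$. One therefore cannot argue by inverse-positivity alone and must instead control the bilinear form $q^T (S^2 + b^2 I)^{-1} p$ directly, exploiting that the null vector $z_S$ of $S$ is nonnegative and that $p, q > 0$: in the spectral expansion of this form the $\lambda = 0$ contribution equals $\tfrac{(z_S^T p)(q^T z_S)}{b^2 \|z_S\|^2} > 0$, and the task is to show that the contributions from the positive eigenvalues of $S$ cannot exactly cancel it. Two partial avenues I would pursue are: (a) establishing the positivity under additional hypotheses on $S$ (for instance when $S^2$ happens to remain a $Z$-matrix, or for $b$ outside the interval permitted by Lemma~\ref{lem:estimate_imaginary_eigenvalue}, which recovers the small-perturbation regime already covered by Theorem~\ref{thm:main}(vi)); and (b) searching for a diagonal Lyapunov certificate, namely a positive diagonal $G$ with $G(S + pq^T) + (S + pq^T)^T G$ positive definite. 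I expect avenue (b) to fail in general, since the symmetric part $S + \tfrac12(pq^T + qp^T)$ carries an indefinite rank-two term and need not be positive definite, so diagonal stability is likely strictly stronger than what holds here. A complete proof thus appears to require a genuinely new estimate on the sign of $q^T(S^2 + b^2 I)^{-1} p$, and this is where the difficulty of the conjecture is concentrated.
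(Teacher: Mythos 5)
The statement you are addressing is posed in the paper as an open \emph{conjecture}; the paper gives no proof of it, offering only partial evidence (the special cases in Theorem~\ref{thm:main} and Corollary~\ref{cor:d-stable}, and the P-matrix property of Proposition~\ref{prop:pmatrix}). Your proposal is therefore correctly read as an attempt, and, as you yourself acknowledge, it is not a proof: everything reduces to the claim that $q^T(S^2+b^2I)^{-1}p>0$ for every symmetric singular M-matrix $S$ and all $p,q>0$, $b\neq 0$, and this is precisely the step you leave open. The scaffolding around it is sound and uses exactly the paper's toolkit (Lemma~\ref{lem:nonsingular}, Lemma~\ref{lem:spectrumMmatrixplusrankone}, Lemma~\ref{lem:notimmediately}, and the homotopy argument from the proof of Theorem~\ref{thm:main}(vi)), so the genuinely new mathematics required by the conjecture is concentrated in the one inequality you do not establish. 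One further caution: what you actually need is that the two real equations $q^T(S^2+b^2I)^{-1}p=0$ and $q^TS(S^2+b^2I)^{-1}p=-1$ cannot hold \emph{simultaneously}; insisting that the first alone is impossible is a strictly stronger statement, and by discarding the second equation you may be throwing away information that a complete argument would need.

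There is also a computational slip in your reduction. You have $D^{1/2}\,D(A+vw^T)\,D^{-1/2}=D^{3/2}AD^{-1/2}+(D^{3/2}v)(D^{-1/2}w)^T$, and $D^{3/2}AD^{-1/2}$ is neither equal to $D^{1/2}AD^{1/2}$ nor symmetric, so the displayed identity defining $S$, $p$, $q$ is false as written. The similarity you want is $D^{-1/2}\,D(A+vw^T)\,D^{1/2}=D^{1/2}AD^{1/2}+(D^{1/2}v)(D^{1/2}w)^T$, giving $S=D^{1/2}AD^{1/2}$ (still a symmetric singular M-matrix, since congruence preserves positive semidefiniteness and the diagonal scaling preserves the sign pattern, with null vector $D^{-1/2}z\geq 0$) and $p=D^{1/2}v>0$, $q=D^{1/2}w>0$. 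With this correction the equivalent reformulation you state --- that $S+pq^T$ is strictly positive stable for every symmetric singular M-matrix $S$ with algebraically simple zero eigenvalue and every $p,q>0$ --- does follow, but the conjecture itself remains unproved by your argument.
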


As a potentially significant step towards a better understanding of this problem, we also raise the following conjecture (which might apply to the general case, i.e. also in case $A$ is not symmetric).
\begin{conjecture}
Let $\Gamma(t)$ be as defined in Lemma~\ref{lem:notimmediately}. There exists a $t_1>0$ such that for $t>t_1$ the matrix $\Gamma(t)$ is strictly positive stable.
\end{conjecture}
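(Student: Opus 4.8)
The natural framework is the high-gain (root-locus) asymptotics of the characteristic equation. By the matrix determinant lemma the characteristic polynomial of $\Gamma(t)=A+t\,vw^T$ factors as $\chi_t(\mu)=p(\mu)-t\,q(\mu)$, where $p(\mu)=\det(\mu I-A)$ and $q(\mu)=w^T\operatorname{adj}(\mu I-A)\,v$; equivalently, Lemma~\ref{lem:spectrumMmatrixplusrankone} applied to $A+t\,vw^T$ says that the eigenvalues $\mu\notin\sigma(A)$ solve $w^T(\mu I-A)^{-1}v=1/t$. As $t\to\infty$ the right-hand side tends to $0$, and the $n$ eigenvalues split into two families: $\deg q$ of them stay bounded and converge to the roots of $q$ (the transmission zeros of the triple $(A,v,w^T)$), while the remaining $r:=n-\deg q$ escape to infinity. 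I would first record the clean consequence of the hypotheses: since $0$ is algebraically simple, $p(\mu)=\mu\,\tilde p(\mu)$ with $\tilde p(0)\neq0$, while near $0$ one has $(\mu I-A)^{-1}=\mu^{-1}\Pi_0+O(1)$ with $\Pi_0=z_r z_\ell^T/(z_\ell^Tz_r)$, so that $q(0)=\tilde p(0)\,(w^Tz_r)(z_\ell^Tv)/(z_\ell^Tz_r)\neq0$ by~\eqref{eq:nonzero_projection}. Hence $0$ is never a limiting eigenvalue, consistent with the nonsingularity of $\Gamma(t)$ (Lemma~\ref{lem:nonsingular}). The plan is then to prove (A) every finite root of $q$ lies in the open right half-plane, and (B) every escaping branch has asymptotically positive real part; once both hold, for $t$ large all $n$ eigenvalues cluster near these limits and lie in the open right half-plane.

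For the escaping branches I would read off the leading coefficients of $q$ from $\operatorname{adj}(\mu I-A)=\mu^{n-1}I+\mu^{n-2}\bigl(A-(\tr A)I\bigr)+\cdots$: the coefficient of $\mu^{n-1}$ is $w^Tv\geq0$, and the coefficient of $\mu^{n-2}$ is $w^TAv-(\tr A)(w^Tv)$, where crucially $w^TAv=-\,w^THv\leq0$ because $H\geq0$ and $v,w\geq0$. When $w^Tv>0$ we have $r=1$ and a single real branch $\mu\sim t\,(w^Tv)\to+\infty$, which is harmless. The delicate case is $w^Tv=0$ (which can occur even for irreducible $A$, e.g.\ when $v$ and $w$ have disjoint support), where $\tr\Gamma(t)=\tr A>0$ is independent of $t$; the $r\geq2$ escaping roots then go to infinity along asymptotes whose real parts share a common centroid $\tfrac1r\bigl(\tr A-\sum_j\Re\zeta_j\bigr)$, with $\zeta_j$ the finite transmission zeros. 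For $r=2$ the sign $w^TAv=-w^THv<0$ forces the leading balance $\mu^2\sim-t\,(w^THv)$, so the two branches are complex-conjugate and vertical and one only has to show the centroid is positive. I expect the regime $r\geq3$ (which additionally forces $w^THv=0$) to be sharpest, since there the asymptote directions $e^{2\pi\i k/r}$ already point into the left half-plane; one must show the nonnegativity constraints are incompatible with such high relative degree, or bound the escaping real parts directly.

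For the bounded branches I would identify the finite roots of $q$ as the eigenvalues of the compression $\hat A=L^TAR$, where (assuming $w^Tv\neq0$) the columns of $R$ span $w^\perp$, those of $L$ span $v^\perp$, and $L^TR=I$; equivalently $\hat A=QA|_{w^\perp}$ for the oblique projection $Q=I-vw^T/(w^Tv)$ onto $w^\perp$ along $v$. The goal is $\sigma(\hat A)\subset\{\Re>0\}$. As a sanity check the symmetric case $v=w$ is immediate: then $Q$ is the \emph{orthogonal} projection onto $v^\perp$, $\hat A$ is a symmetric compression of the positive semidefinite $A$, and Cauchy interlacing together with $v^Tz\neq0$ (so $z\notin v^\perp$) gives $\sigma(\hat A)\subset(0,\infty)$; combined with the single real escaping branch this reproves the conjecture when $v=w$ and $A$ is symmetric, in the spirit of Theorem~\ref{thm:main}~(iii). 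First-order perturbation theory also gives the right qualitative picture for general $v,w$: the eigenvalue born at $0$ leaves with positive speed $(z_\ell^Tv)(w^Tz_r)/(z_\ell^Tz_r)>0$, matching Lemma~\ref{lem:notimmediately}.

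The main obstacle is precisely the step $\sigma(\hat A)\subset\{\Re>0\}$ for a genuinely \emph{oblique} (nonsymmetric) compression, together with the degenerate escaping asymptotics when $w^Tv=0$. Compressions of positive-stable matrices need not be positive-stable, so no off-the-shelf result applies; any proof must extract positivity from the full package ``$A$ a singular M-matrix whose only imaginary-axis eigenvalue is the Perron eigenvalue $0$, with nonnegative eigenvectors $z_\ell,z_r$, plus $v,w\geq0$ and~\eqref{eq:nonzero_projection}''. Because a limiting transmission zero might still sit on the imaginary axis away from the origin, goal (A) really requires a \emph{strict} exclusion of the axis. I would therefore try to glue the two families by a global count: apply the argument principle to $\chi_t$ along the imaginary axis to show the number of right-half-plane eigenvalues is constant and equal to $n$ for all large $t$, thereby reducing the conjecture to a quantitative ``no purely imaginary eigenvalue for large $t$'' statement, i.e.\ that $w^T(\i bI-A)^{-1}v=1/t$ has no real solution $b$ once $t$ is large. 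This is the nonnormal analogue of Lemma~\ref{lem:estimate_imaginary_eigenvalue}, where one can no longer unitarily diagonalize $A$ and must instead exploit the M-matrix resolvent structure; proving it is where I expect the real work to lie. Concretely, I would first settle $w^Tv>0$ with $A$ symmetric, then relax symmetry, and finally attack the degenerate $w^Tv=0$ case.
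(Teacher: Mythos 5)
There is nothing in the paper to compare your argument against: the statement you are addressing is posed as an open \emph{conjecture} (the authors prove only the small-$t$ counterpart, Lemma~\ref{lem:notimmediately}, and the stability results of Theorem~\ref{thm:main}), so any complete argument here would be new mathematics rather than a reconstruction. Your high-gain framework is sound as far as it goes: the factorization $\chi_t(\mu)=p(\mu)-t\,q(\mu)$ with $q(\mu)=w^T\operatorname{adj}(\mu I-A)v$ is correct, the computation $q(0)=\tilde p(0)(w^Tz_r)(z_\ell^Tv)/(z_\ell^Tz_r)\neq0$ under~\eqref{eq:nonzero_projection} is correct, and the split into bounded branches (converging to roots of $q$) and escaping branches is the right taxonomy. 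One local slip: $w^TAv=\rho(H)\,w^Tv-w^THv$, so the identity $w^TAv=-w^THv$ you invoke holds only when $w^Tv=0$; you happen to use it only in that case, but as written the claim is wrong in general.

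The genuine gap is that the two statements on which everything rests --- (A) every root of $q$ lies in the \emph{open} right half-plane, and (B) every escaping branch eventually has positive real part --- are exactly the content of the conjecture, and you do not prove either; you explicitly defer them (``proving it is where I expect the real work to lie''). Neither follows from the structure you have assembled: oblique compressions of positive-stable matrices need not be positive-stable, as you note, and the paper's own counterexample~\eqref{eq:counterexample} (where $v,w>0$, so $r=1$ and the escaping branch is harmless, yet $\Gamma(1)$ has eigenvalues with negative real part) shows that the bounded branches can wander into the left half-plane at finite $t$; ruling out that they \emph{remain} there in the limit, or touch the imaginary axis for arbitrarily large $t$, is precisely the missing quantitative ``no solution of $w^T(\i bI-A)^{-1}v=1/t$ for large $t$'' statement you identify. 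So what you have is a plausible and well-organized research program that correctly locates the difficulty, not a proof; the conjecture remains open after your argument exactly where it was open before it.
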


\subsection{Positive principal minors}
\label{sec:pmatrices}

The following result could play a part in a proof of Conjecture~\ref{conj:symmetric}. As is well known, D-stability is implied by Lyapunov diagonal stability \cite[Theorem 2.5.8]{HornJohnson1994}, where $A$ is said to be \emph{Lyapunov diagonally stable} if there exists a positive diagonal matrix $D$ such that $A^T D + D A$ is positive definite. Matrices that are Lyapunov diagonally stable are examples of P-matrices, i.e. they have positive principal minors \cite[Theorem 6.2.3]{BermanPlemmons1994}. We show below that the class of matrices we are concerned with, consists of P-matrices.

\begin{lemma}
\label{lem:p0matrix}
Let $A$ be a singular M-matrix and let $v, w \in \R^n$,  $v \geq 0$, $w \geq 0$.
Then the matrix $A + v w^T$ is a $\textrm{P}_0$-matrix, i.e. all its principal minors are nonnegative.
\end{lemma}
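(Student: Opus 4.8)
The plan is to reduce the claim to a single statement about rank one perturbations of M-matrices, applied submatrix by submatrix. Fix a nonempty index set $S \subseteq \{1, \dots, n\}$, and write $M[S]$ for the principal submatrix of a matrix $M$ on the rows and columns indexed by $S$, and $v_S$, $w_S$ for the corresponding subvectors of $v$, $w$. Since the $S$-principal minor of $A + vw^T$ is $\det\bigl((A+vw^T)[S]\bigr) = \det\bigl(A[S] + v_S w_S^T\bigr)$, it suffices to show that this quantity is nonnegative for every $S$. First I would record that $A[S]$ is again an M-matrix: writing $A = \rho(H)I - H$, we have $A[S] = \rho(H) I - H[S]$ with $H[S] \geq 0$, and by monotonicity of the spectral radius under passing to a principal submatrix of a nonnegative matrix, $\rho(H[S]) \leq \rho(H)$, so $A[S]$ is an M-matrix (singular or nonsingular).

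The key algebraic tool is the matrix determinant lemma in its adjugate form, which is valid even for singular $A[S]$ (both sides being polynomials in the entries):
\begin{equation}
\label{eq:detlemma}
\det\bigl(A[S] + v_S w_S^T\bigr) = \det\bigl(A[S]\bigr) + w_S^T \operatorname{adj}\bigl(A[S]\bigr) v_S.
\end{equation}
I would then argue that both summands on the right are nonnegative. The first, $\det(A[S])$, is nonnegative because M-matrices are $\mathrm{P}_0$-matrices (all principal minors nonnegative; see \cite[Chapter 6]{BermanPlemmons1994}), and $\det(A[S])$ is itself a principal minor of the M-matrix $A[S]$. For the second term, since $v_S, w_S \geq 0$, it is enough to show that the adjugate $\operatorname{adj}(A[S])$ is entrywise nonnegative.

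The core of the argument, and the step I expect to be the main obstacle, is establishing $\operatorname{adj}(A[S]) \geq 0$ for a possibly \emph{singular} M-matrix. For a \emph{nonsingular} M-matrix $N = sI - P$ with $s > \rho(P)$ and $P \geq 0$, the inverse is nonnegative, $N^{-1} = \sum_{k \geq 0} P^k / s^{k+1} \geq 0$ (Neumann series; see \cite[Theorem 2.5.3]{HornJohnson1994}), and $\det(N) > 0$, so $\operatorname{adj}(N) = \det(N)\, N^{-1} \geq 0$. To reach the singular case, I would perturb: for $\varepsilon > 0$ the matrix $A[S] + \varepsilon I = (\rho(H)+\varepsilon)I - H[S]$ is a nonsingular M-matrix, hence $\operatorname{adj}(A[S] + \varepsilon I) \geq 0$. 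Since the adjugate is a polynomial, and therefore continuous, function of the matrix entries, letting $\varepsilon \downarrow 0$ yields $\operatorname{adj}(A[S]) = \lim_{\varepsilon \downarrow 0} \operatorname{adj}(A[S] + \varepsilon I) \geq 0$. Combining this with $\det(A[S]) \geq 0$ and $v_S, w_S \geq 0$ in \eqref{eq:detlemma} gives $\det\bigl((A+vw^T)[S]\bigr) \geq 0$. As $S$ was arbitrary, all principal minors of $A + vw^T$ are nonnegative, proving that it is a $\mathrm{P}_0$-matrix.
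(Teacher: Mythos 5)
Your proof is correct, but it takes a genuinely different route from the paper's. The paper invokes the characterization of $\mathrm{P}_0$-matrices from \cite[Theorem 6.4.6 ($A_3$)]{BermanPlemmons1994} --- a matrix is $\mathrm{P}_0$ if and only if adding any positive diagonal matrix $D$ makes it nonsingular --- and then runs a short kernel argument: if $(A + vw^T + D)x = 0$, then with $B = A + D$ a nonsingular M-matrix one gets $x = \eta B^{-1}v$ with $\eta = -w^Tx$, and the identity $0 = \eta v + \eta\, vw^T B^{-1} v$ together with $B^{-1}\geq 0$, $v, w \geq 0$ forces $\eta = 0$. You instead attack each principal minor head-on via the adjugate form of the matrix determinant lemma, $\det(A[S] + v_S w_S^T) = \det(A[S]) + w_S^T \operatorname{adj}(A[S]) v_S$, and show both summands are nonnegative, handling the possibly singular submatrix $A[S]$ by a limiting argument $\operatorname{adj}(A[S]) = \lim_{\varepsilon \downarrow 0}\operatorname{adj}(A[S]+\varepsilon I) \geq 0$. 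All the ingredients you use are standard and correctly applied (monotonicity of $\rho$ under passage to principal submatrices of nonnegative matrices, inverse-positivity of nonsingular M-matrices, continuity of the adjugate). Both arguments ultimately rest on the same M-matrix fact, namely $N^{-1}\geq 0$ for nonsingular M-matrices $N$; the paper's version is shorter because the $\mathrm{P}_0$ characterization lets it sidestep minors, adjugates, and the singular case entirely, while yours is more self-contained (no appeal to the $A_3$ equivalence) and more explicit, exhibiting each principal minor of $A + vw^T$ as a sum of two manifestly nonnegative terms, which makes visible exactly where strictness could fail.
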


\begin{proof}
Write $Q = v w^T$.
By \cite{BermanPlemmons1994}, Theorem 6.4.6 ($A_3$), it suffices to show that $A + Q + D$ is nonsingular for all positive diagonal matrices $D$. So let $D$ be a positive diagonal matrix and suppose $(A + Q + D)x = 0$ for some $x \in \R^n$. Write $B = A + D$. Then $B$ is a nonsingular M-matrix (by \cite{BermanPlemmons1994}, Theorem 6.2.3 ($C_{11}$)). Since $(A+Q+D)x = (B + v w^T)x = 0$, we have that $x = \eta B^{-1} v$ for $\eta = -w^T x$. If $v = 0$, then $x = 0$. Suppose $v \neq 0$. Then
\[ 0 = (B + Q)x = \eta (B + Q) B^{-1}v = \eta v + \eta Q B^{-1} v.\]
Since $Q \geq 0$ and $B^{-1} \geq 0$ (because it is an inverse M-matrix), $Q B^{-1} \geq 0$. Together with $v \geq 0$, $v \neq 0$, and the above equality, this implies that $\eta = 0$, i.e. $x = \eta B^{-1} v = 0$.
\end{proof}


\begin{proposition}
\label{prop:pmatrix}
Let $A \in \R^{n \times n}$ be a singular, irreducible M-matrix and let $v, w \in \R^n$, $v \geq 0$, $w \geq 0$, satisfying~\eqref{eq:nonzero_projection}.
Then $A + v w^T$ is a P-matrix, i.e. all principal minors are positive.
\end{proposition}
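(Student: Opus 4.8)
The plan is to strengthen Lemma~\ref{lem:p0matrix}, which already guarantees that every principal minor of $A + v w^T$ is nonnegative, to strict positivity. A principal minor is indexed by a nonempty set $S \subseteq \{1, \dots, n\}$ and equals $\det(A[S] + v_S w_S^T)$, where $A[S]$ denotes the principal submatrix of $A$ supported on $S$ and $v_S, w_S \geq 0$ are the corresponding subvectors of $v, w$. I would treat the proper subsets $S \neq \{1, \dots, n\}$ and the full set $S = \{1, \dots, n\}$ separately, since the latter corresponds to the singular matrix $A$ itself and requires a different argument.

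For a proper subset $S$, the key observation I would exploit is that irreducibility of $A$ forces a strict drop in spectral radius. Indeed, $A$ and $H$ have the same off-diagonal zero pattern, so $H$ is irreducible as well; and for an irreducible nonnegative matrix the spectral radius of any proper principal submatrix is strictly smaller, i.e. $\rho(H[S]) < \rho(H)$, by Perron--Frobenius theory (see \cite{HornJohnson1990}). Consequently $A[S] = \rho(H) I - H[S]$ is a \emph{nonsingular} M-matrix, whence $\det A[S] > 0$ and $(A[S])^{-1} \geq 0$. The matrix determinant lemma (\cite[Section 0.7.4]{HornJohnson1990}) then gives
\[ \det\left(A[S] + v_S w_S^T\right) = \det(A[S])\left(1 + w_S^T (A[S])^{-1} v_S\right), \]
and since $v_S, w_S \geq 0$ and $(A[S])^{-1} \geq 0$, the bracketed factor is at least $1$, so the minor is strictly positive.

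For the full set $S = \{1, \dots, n\}$ the matrix $A$ is singular, so the determinant lemma no longer applies directly; here I would instead combine two facts already in hand. Lemma~\ref{lem:p0matrix} gives $\det(A + v w^T) \geq 0$, while Lemma~\ref{lem:nonsingular} together with hypothesis~\eqref{eq:nonzero_projection} guarantees that $A + v w^T$ is nonsingular, so $\det(A + v w^T) \neq 0$. The two together force $\det(A + v w^T) > 0$. Combining the two cases shows every principal minor is positive, i.e. $A + v w^T$ is a P-matrix. I expect the only nontrivial ingredient to be the strict inequality $\rho(H[S]) < \rho(H)$ for proper submatrices: this is precisely the point at which irreducibility of $A$ (rather than the mere geometric simplicity of the zero eigenvalue used elsewhere) becomes essential, and it is exactly what upgrades the $\textrm{P}_0$ conclusion of Lemma~\ref{lem:p0matrix} to the P-matrix conclusion sought here.
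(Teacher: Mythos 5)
Your proof is correct and follows essentially the same route as the paper's: proper principal submatrices $A[S]$ are nonsingular M-matrices by irreducibility, the perturbed minors are handled by factoring out $\det(A[S])$ and using nonnegativity of $(A[S])^{-1}$, and the full determinant is obtained by combining Lemma~\ref{lem:p0matrix} with Lemma~\ref{lem:nonsingular} under~\eqref{eq:nonzero_projection}. The only cosmetic difference is that you evaluate $\det\left(I + (A[S])^{-1} v_S w_S^T\right)$ via the matrix determinant lemma, whereas the paper reads off the eigenvalues of the nonnegative rank-one matrix $(A[S])^{-1} v_S w_S^T$; both give the same factor $1 + w_S^T (A[S])^{-1} v_S \geq 1$.
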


\begin{proof}
Write $Q = v w^T$.

Let $\beta \subset \{1, \hdots, n\}$ be a proper subset of $A$ and consider the principal submatrix $A[\beta] + Q[\beta]$.
We have 
\[ \det(A[\beta] + Q[\beta]) = \det(A[\beta]) \det ( I + A[\beta]^{-1} Q[\beta]),\]
and we will show that the two factors are positive.
By \cite{BermanPlemmons1994}, Theorem 6.4.16, $A[\beta]$ is a nonsingular M-matrix, using irreducibility of $A$. 
Note that $A[\beta]^{-1} Q[\beta]$ is by the assumption on $Q$ a matrix of rank one. Since $A[\beta]$ is an M-matrix, $A[\beta]^{-1}$ and hence $A[\beta]^{-1} Q[\beta]$ are nonnegative matrices and therefore we can write $A[\beta]^{-1} Q[\beta] = \gamma x y^T$, with $\gamma \geq 0$, $x, y \geq 0$, and $\langle x, y \rangle = 1$.
The eigenvalues of $I + A[\beta]^{-1} Q[\beta]$ are then $1$ (with eigenspace $y^{\perp}$) and $1 + \gamma$ (with eigenvector $x$), and hence $\det(I + A[\beta]^{-1} Q[\beta]) > 0$. Since $A[\beta]$ is a nonsingular M-matrix, also $\det(A[\beta])>0$. So all proper principal minors are positive and it remains to check that $\det(A + Q) > 0$. By Lemma~\ref{lem:nonsingular}, $A +Q$ is nonsingular. Since, by Lemma~\ref{lem:p0matrix}, $A + Q$ is a $P_0$-matrix, it follows that $\det(A+Q) > 0$.
\end{proof}

\section{Conclusion}
Under certain conditions stability or D-stability can be established for singular M-matrices, perturbed non-trivially by a rank one matrix. As illustrated by a counterexample, this is not possible in general. However we have shown that these matrices are `close' to being positive stable, in the sense that all principal minors are positive.

\subsection*{Acknowledgement}
We thank the anonymous referee for his detailed inspection of our manuscript, resulting in various comments and corrections that have helped to eliminate errors and improve the readibility of this work.

\appendix
\section{Details on the origin of the problem}
\label{app:origin_of_problem}
Suppose $H, C \in \R^{n \times n}$ with $C$ nonsingular. For $z \in \R^n$, $z \neq 0$, let $P_z = \frac{z z^T}{|z|^2}$. Note that for $z \in \R^n$, $P_z$ is the matrix corresponding to orthogonal projection onto the span of $z$. Consider the following coupled system of ordinary differential equations.
\begin{equation} \tag{\ref{eq:ode}} \left\{ \begin{array}{ll} \dot z(t) & = \left(I - P_{z(t)} \right) C (H - \lambda(t) I) z(t), \\
                          \dot \lambda(t) & = z(t)^T C \left(H - \lambda(t) I \right)z(t),
                         \end{array} \right.
                         \quad (t \geq 0.)
\end{equation}

%
\begin{observation}
Suppose $(z(t),\lambda(t))$, $t \in \R$, satisfies~\eqref{eq:ode}. Then $\frac{d}{dt} |z(t)|^2 = 0$. In particular, if $|z(t_0)| = 1$ for some $t_0 \in \R$, then $|z(t)| = 1$ for all $t \in \R$.
\end{observation}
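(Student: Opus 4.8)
The plan is to differentiate $|z(t)|^2 = z(t)^T z(t)$ directly and to exploit the fact that the projection $I - P_{z(t)}$ appearing in the velocity $\dot z(t)$ annihilates $z(t)$ when applied on the left. First I would write $\frac{d}{dt} |z(t)|^2 = 2\, z(t)^T \dot z(t)$ and substitute the first equation of~\eqref{eq:ode}, abbreviating $u(t) := C(H - \lambda(t) I) z(t)$ so that $\dot z(t) = (I - P_{z(t)}) u(t)$. The entire argument then reduces to establishing the identity $z(t)^T (I - P_{z(t)}) = 0$.

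Next I would verify this identity from the definition $P_z = \frac{z z^T}{|z|^2}$: since $z^T P_z = \frac{(z^T z)\, z^T}{|z|^2} = z^T$, it follows that $z^T(I - P_z) = 0$. Geometrically this is simply the statement that $P_z$ is the orthogonal projection onto $\operatorname{span}(z)$ and therefore fixes $z$, so the complementary projection $I - P_z$ has range orthogonal to $z$; consequently $\dot z(t)$ is orthogonal to $z(t)$ at every time. Combining, $z(t)^T \dot z(t) = z(t)^T (I - P_{z(t)}) u(t) = 0$, whence $\frac{d}{dt} |z(t)|^2 = 0$.

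Finally, for the ``in particular'' clause, constancy of $t \mapsto |z(t)|^2$ immediately yields that if $|z(t_0)| = 1$ for some $t_0$, then $|z(t)|^2 = |z(t_0)|^2 = 1$, and hence $|z(t)| = 1$, for all $t$. There is no genuine obstacle here; the only point deserving care is the algebraic identity $z^T P_z = z^T$, which is precisely what renders the nonlinear projection term harmless. In effect, the flow in~\eqref{eq:ode} has been constructed so that the velocity $\dot z$ is projected onto the tangent space of the sphere $\{ |z| = 1 \}$, making norm preservation automatic.
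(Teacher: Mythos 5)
Your proof is correct, and it is exactly the computation the paper has in mind: the paper states this observation without proof, treating the identity $z^T(I-P_z)=0$ (hence $z^T\dot z=0$) as immediate. Nothing is missing; the implicit assumption $z(t)\neq 0$ needed for $P_{z(t)}$ to be defined is already built into the formulation of~\eqref{eq:ode}.
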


Starting from $z(0) \in \R^n$ with $|z(0)| = 1$, we may therefore consider~\eqref{eq:ode} as defining a flow on $S^{n-1} \times \R$ (with $S^{n-1}$ the unit sphere in $\R^n$). The equilibrium points of~\eqref{eq:ode} may be characterized as follows.
\begin{observation}
\label{obs:equilibrium}
$(z, \lambda)$ is an equilibrium point of~\eqref{eq:ode} if and only if $z$ is an eigenvector of $H$ with eigenvalue $\lambda$.
\end{observation}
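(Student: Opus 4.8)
The plan is to prove both implications directly from the two equilibrium equations, working throughout with $z \neq 0$ (justified since the flow lives on $S^{n-1} \times \R$, so $|z| = 1$ and $P_z$ is well-defined). The easy direction is $(\Leftarrow)$: if $Hz = \lambda z$, then $(H - \lambda I)z = 0$, hence $C(H - \lambda I)z = 0$, so both right-hand sides of~\eqref{eq:ode} vanish immediately and $(z,\lambda)$ is an equilibrium point.

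For the forward direction $(\Rightarrow)$, I would introduce the auxiliary vector $y := C(H - \lambda I)z$ and read off what each equilibrium condition says about it. The condition $\dot z = 0$ reads $(I - P_z)y = 0$, i.e. $y = P_z y$, so that $y$ is a scalar multiple of $z$; explicitly $y = \frac{\langle z, y\rangle}{|z|^2}\, z$. The condition $\dot\lambda = 0$ reads $z^T C(H - \lambda I)z = \langle z, y\rangle = 0$. Substituting this into the previous expression yields $y = 0$.

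Finally, since $C$ is nonsingular, $y = C(H - \lambda I)z = 0$ forces $(H - \lambda I)z = 0$, i.e. $Hz = \lambda z$; as $z \neq 0$, this exhibits $z$ as an eigenvector of $H$ with eigenvalue $\lambda$, completing the argument.

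I do not anticipate a genuine obstacle here; the argument is short. The only conceptual point worth emphasizing is the geometric observation that the two equilibrium conditions pull $y$ in orthogonal directions: $\dot z = 0$ forces $y$ into the span of $z$, while $\dot\lambda = 0$ forces $y$ orthogonal to $z$, so that together they leave only $y = 0$.
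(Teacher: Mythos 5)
Your proof is correct and follows essentially the same route as the paper: both arguments decompose $y = C(H-\lambda I)z$ into its component orthogonal to $z$ (killed by $\dot z = 0$) and its component along $z$ (killed by $\dot\lambda = 0$), conclude $y=0$, and invoke the nonsingularity of $C$. The paper merely compresses the two steps into the single identity $C(H-\lambda I)z = \left((I-P_z) + zz^T\right)C(H-\lambda I)z$, using $|z|=1$.
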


\begin{proof}
The point $(z,\lambda) \in S^{n-1} \times \R$ is an equilibrium point of~\eqref{eq:ode} if and only if
\[ (I - P_z) C (H - \lambda I ) z = 0, \quad \mbox{and} \quad z^T C(H - \lambda I) z = 0.\]
This is the case if and only if $C(H - \lambda I) z = \left( (I - P_z) + z z^T \right) C (H -\lambda I) z = 0$. Since $C$ is nonsingular, the assertion follows.
\end{proof}

According to Observation~\ref{obs:equilibrium}, the system~\eqref{eq:ode} has the potential to locate eigenvalues and eigenvectors of $H$. It is then natural to investigate the stability of the equilibrium points.

The following proposition shows that the stability of an equilibrium point depends on the stability properties of a matrix of the type considered in this paper.

\begin{proposition}
\label{prop:stable_equilibrium}
Let $(z, \lambda)$ denote an equilibrium point of~\eqref{eq:ode}, with $|z| = 1$. Then this equilibrium point is locally (asymptotically) stable if and only if $C \left( \lambda I - H + z z^T \right)$ is (strictly) positive stable.
\end{proposition}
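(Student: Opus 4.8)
The plan is to apply Lyapunov's indirect method (linearization) on the invariant manifold. Recall from the preceding observation that $|z(t)|$ is conserved, so the flow restricts to $S^{n-1}\times\R$, whose tangent space at the equilibrium $(z,\lambda)$ is $z^{\perp}\times\R$, where $z^{\perp}=\{u\in\R^n:\langle z,u\rangle=0\}$. Write the two components of the vector field as $F(z,\lambda)=(I-P_z)C(H-\lambda I)z$ and $G(z,\lambda)=z^{T}C(H-\lambda I)z$. The crucial simplification is that at an equilibrium $C(H-\lambda I)z=0$ (as in the proof of Observation~\ref{obs:equilibrium}), so $F$ and $G$ both vanish there; consequently the linearization coincides with the ordinary Jacobian of $(F,G)$ restricted to the tangent directions, and there are no connection/curvature terms to track.

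First I would compute the four blocks of the Jacobian $J$ acting on $(\delta z,\delta\lambda)\in z^{\perp}\times\R$. Differentiating and repeatedly using $C(H-\lambda I)z=0$, $\langle z,\delta z\rangle=0$ and $|z|=1$ to annihilate the terms arising from the $z$-dependence of $P_z$, one finds
\[
J(\delta z,\delta\lambda)=\bigl((I-P_z)C(H-\lambda I)\delta z-\delta\lambda\,(I-P_z)Cz,\ \ z^{T}C(H-\lambda I)\delta z-\delta\lambda\,z^{T}Cz\bigr),
\]
where the first entry indeed lies in $z^{\perp}$, as it must.

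The key step is to recognize that $J$ is similar to $N:=C(H-\lambda I-zz^{T})=-C(\lambda I-H+zz^{T})$. To see this I would introduce the isomorphism $T:z^{\perp}\times\R\to\R^n$, $T(\delta z,\delta\lambda)=\delta z+\delta\lambda\,z$ (with inverse $u\mapsto((I-P_z)u,\ z^{T}u)$), and verify the intertwining identity $NT=TJ$ by a direct computation in which, once more, $C(H-\lambda I)z=0$, $z^{T}\delta z=0$ and $|z|=1$ make all the projection terms cancel. Hence $J=T^{-1}NT$, so $\sigma(J)=\sigma(N)$, which is exactly the set of negatives of the eigenvalues of $C(\lambda I-H+zz^{T})$.

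From here the conclusion follows from standard linearization theory: the equilibrium is locally asymptotically stable precisely when every eigenvalue of $J$ has negative real part, i.e. when every eigenvalue of $C(\lambda I-H+zz^{T})$ has positive real part, which is strict positive stability. I expect the main obstacle to be the borderline, non-strict statement, since Lyapunov's indirect method yields asymptotic stability only from eigenvalues in the open left half-plane and instability from an eigenvalue in the open right half-plane, and is inconclusive on the imaginary axis. The non-asymptotic equivalence (``stable iff positive stable'') will therefore require either an appeal to semisimplicity of any purely imaginary eigenvalues of $J$ or a separate direct argument; this is where I would be most careful, whereas the computation of $J$ and the similarity $J\sim N$ are routine once the equilibrium condition $C(H-\lambda I)z=0$ is used to discard the nonlinear $P_z$-terms.
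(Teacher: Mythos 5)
Your proof is correct and takes essentially the same route as the paper: both compute the linearization restricted to the tangent space $z^{\perp}\times\R$ of $S^{n-1}\times\R$ and exhibit a similarity between it and $-C(\lambda I-H+zz^{T})$, the paper via conjugation by the orthonormal basis matrix $\begin{bmatrix} V & z\end{bmatrix}$ and you via the equivalent isomorphism $T(\delta z,\delta\lambda)=\delta z+\delta\lambda\,z$. The borderline issue you flag for the non-strict/imaginary-axis case is genuine, but the paper passes over it in exactly the same way, so your argument matches the paper's level of rigor.
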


\begin{proof}
%
Suppose $(z, \lambda)$ is an equilibrium point of~\eqref{eq:ode}. 
By straightforward computation, the linearization of~\eqref{eq:ode} in $(z, \lambda)$ is given by the matrix
\begin{equation} \label{eq:linearization} \begin{bmatrix} \left(I - P_{z} \right) C \left( H - \lambda I \right) & - \left(I - P_{z} \right) C z \\
    z^T C \left(H - \lambda I \right) & - z^T C z
   \end{bmatrix},\end{equation}
where we used that $(H - \lambda I)z = 0$.
This linearization is with respect to the standard basis in $\R^n \times \R$. However, we are interested in the flow on $S^{n-1} \times \R$.
Choose $v_1,\hdots, v_{n-1}$ in $\R^n$ such that $\{v_1, \hdots, v_{n-1}, z\}$ is an orthonormal system in $\R^n$. The set $\{ v_1, \hdots, v_{n-1}\}$ may be identified in a canonical way with an orthonormal basis of the tangent space of $S^{n-1}$ in $z$, denoted by $T_{z} S^{n-1}$. Leaving the basis in the one-dimensional factor $\R$ (where $\lambda$ resides) unchanged, we have thus obtained a basis of the tangent space of $S^{n-1} \times \R$ in the point $(z, \lambda)$, consisting of $n$ vectors
\[ w_1 = \begin{bmatrix} v_1 \\ 0 \end{bmatrix}, \dots, w_{n-1} = \begin{bmatrix} v_{n-1} \\ 0, \end{bmatrix}, w_n = \begin{bmatrix} 0 \\ 1 \end{bmatrix}.\]
Let $V = \begin{bmatrix} v_1 & \dots & v_{n-1} \end{bmatrix}$, the matrix with $v_1, \hdots, v_{n-1}$ as columns. A vector in the canonical basis in $\R^n \times \R$ may be expressed in the new basis $w_1, \dots, w_n$ by multiplying it from the left by 
\[ \begin{bmatrix} V^T & 0 \\ 0 & 1 \end{bmatrix} \in \R^{n \times (n+1)}.\]
The inverse operation (i.e. transforming a vector that is expressed with respect to the basis $w_1, \dots, w_n$, into its representation with respect to the canonical basis of $\R^n \times \R$) consists of left-multiplication by
\[ \begin{bmatrix} V & 0 \\ 0 & 1 \end{bmatrix} \in \R^{(n+1) \times n}.\]
Note that $V V^T = I - z z^T$. Now the linearization~\eqref{eq:linearization} of~\eqref{eq:ode} can be expressed with respect to the basis $w_1, \dots, w_n$, by multiplying on the left and right by 
\[ \begin{bmatrix} V^T & 0 \\ 0 & 1 \end{bmatrix} \quad \mbox{and} \quad \begin{bmatrix} V & 0 \\0 & 1 \end{bmatrix},\]
respectively. Using that $V^T P_z = 0$ by the definition of $V$, this gives
\[
\begin{bmatrix} V^T & 0 \\ 0 & 1 \end{bmatrix} \begin{bmatrix} \left(I - P_{z} \right) C \left( H - \lambda I \right) & - \left(I - P_{z} \right) C z \\
    z^T C \left(H - \lambda I \right) & - z^T C z \end{bmatrix}
   \begin{bmatrix} V & 0 \\ 0 & 1 \end{bmatrix} = \begin{bmatrix} V^T C(H - \lambda I) V & -V^T C z \\  z^T C (H - \lambda I)V  & - z^T C z \end{bmatrix}.\]
The equilibrium point $(z, \lambda)$ is locally (asymptotically) stable if and only if the negative of this matrix is (strictly) positive stable. By performing an orthonormal basis transformation by $\begin{bmatrix} v_1 & \dots & v_{n-1} & z \end{bmatrix} = \begin{bmatrix} V & z \end{bmatrix}$, we find that this is equivalent to the requirement that the matrix
\begin{align*}
- \begin{bmatrix} V & z \end{bmatrix} \begin{bmatrix} V^T C(H - \lambda I) V & -V^T C z \\  z^T C (H - \lambda I)V  & - z^T C z \end{bmatrix} \begin{bmatrix} V^T \\ z^T \end{bmatrix} & = - \begin{bmatrix} C(H-\lambda I) V & - C z \end{bmatrix} \begin{bmatrix} V^T \\ z^T \end{bmatrix} \\
& = C \left( \lambda I - H + z z^T \right).
\end{align*}
is (strictly) positive stable.
\end{proof}

\begin{remark}
The matrix $C$ may be interpreted as an unknown perturbation of the identity matrix. It may be the case that some equilibrium point $(z, \lambda)$ is stable for $C = I$, and that we want to be sure that this stability is maintained for some class of matrices $C \neq I$. This may be particularly relevant in the case of stochastic variants of~\eqref{eq:ode}. See \cite{BierkensKappen2012a} where this situation occurs, for perturbations $C$ within the class of positive diagonal matrices.
\end{remark}

\begin{remark}
Variations are possible in the formulation of~\eqref{eq:ode}, leading to variants of the stability condition as posed in Proposition~\ref{prop:stable_equilibrium}. However, all of these have the particular form as described above. We will not pursue this topic here any further, and hope that the contents of this section suffices to motivate the general problem formulation.
\end{remark}

\end{document}